\definecolor{shadecolor}{rgb}{0.95, 0.95, 0.86}
\def\Ai{\mathrm {Ai}}
\def\wt{\widetilde}
\def\wh{\widehat}
\def\ds{\displaystyle}
\def\res{\mathop{\mathrm{res}}\limits_}
\def\eqref#1{(\ref{#1})}
\renewcommand{\theequation}{\arabic{section}-\arabic{equation}}
\def\tr{\mathrm {Tr}}
\def\le{\left}
\def\ri{\right}
\def\QED{{\bf Q.E.D.}\par\vskip 5pt}
\def\bc{\begin{corollary}}
\def\ec{\end{corollary}}
\def\&{&{\hskip -20pt}}
\def\m{\mathop}
\def\br{\begin{remark}\rm\small}
\def\er{\end{remark}}
\def\bt{\begin{theorem}}
\def\et{\end{theorem}}
\def\bd{\begin{definition}}
\def\ed{\end{definition}}
\def\bp{\begin{proposition}\rm}
\def\bl{\begin{lemma}\em}
\def\el{\end{lemma}}
\def\ep{\end{proposition}}
\def\bea{\begin{eqnarray}}
\def\eea{\end{eqnarray}}
\def \pa{\partial}
\def\C{{\mathbb C}}
\def\R{{\mathbb R}}
\def\N{{\mathbb N}}
\newtheorem{theorem}{Theorem}[section]
\newtheorem{conjecture}{Conjecture}[section]
\newtheorem{coroll}{Corollary}[section]
\newtheorem{lemma}{Lemma}[section]
\newtheorem{remark}{Remark}[section]
\newtheorem{remarks}[remark]{Remarks}
\newtheorem{proposition}{Proposition}[section] 
\newtheorem{definition}{Definition}[section]
\def\br{\begin{remark}}
\def\er{\end{remark}}
\def\bt{\begin{theorem}}
\def\et{\end{theorem}}
\def\bc{\begin{coroll}}
\def\ec{\end{coroll}}
\def\brs{\begin{remarks} \rm\
\begin{enumerate}}
\def\ers{\end{enumerate}\end{remarks}}
\def\bl{\begin{lemma}}
\def\el{\end{lemma}}
\def\bd{\begin{definition}}
\def\ed{\end{definition}}
\def\bp{\begin{proposition}}
\def\ep{\end{proposition}}
\def\be{\begin{equation}}
\def\ee{\end{equation}}
\def\d{{\rm d}}
\def\bea{\begin{eqnarray}}
\def\eea{\end{eqnarray}}
\def \pa{\partial}
\def\iint{\int\!\!\!\!\int}
\date{}
\begin{document}
\baselineskip 16pt plus 1pt minus 1pt
\vspace{0.2cm}
\begin{center}
\begin{Large}
\textbf{Cauchy-Laguerre two-matrix model  and the Meijer-G random point field}
\end{Large}\\
\bigskip
\begin{large} {M.
Bertola$^{\dagger}$,  M. Gekhtman$^{\ddagger}$, J. Szmigielski }$^{\star}$
\end{large}
\\
\bigskip
\begin{small}
$^{\dagger}$ {\em CRM,
Universit\'e de Montr\'eal and Department of Mathematics and
Statistics, Concordia University}\end{small}

\begin{small}
$^{\ddagger}$ {\em Department of Mathematics, University of Notre Dame}\end{small}

\begin{small}
$^{\star}$ {\em Department of Mathematics and Statistics, University of Saskatchewan}\end{small}
\end{center}

\begin{center}{\bf Abstract}\end{center}
We apply the general theory  of Cauchy biorthogonal polynomials  developed in \cite{Bertola:CauchyMM} and \cite{Bertola:CBOPs} to the case associated with Laguerre measures.  In 
particular, we obtain explicit formul\ae\ in terms of Meijer-G functions for all key objects relevant to the study of the corresponding biorthogonal polynomials and the Cauchy two-matrix model associated with them. The central theorem we prove is  that a scaling limit of the correlation functions for eigenvalues near the origin exists,  and is given by a new determinantal two--level random point field, {\em the Meijer-G random field}. 
We conjecture that this random point field leads to a novel universality class of 
random fields parametrized by exponents of Laguerre weights.  
We express the joint distributions of the smallest eigenvalues in terms of suitable Fredholm determinants and evaluate them numerically.  We also   show that in a suitable limit,  the Meijer-G random field converges to the Bessel random field and hence the behavior of the eigenvalues of one of the two matrices converges to the one of the Laguerre ensemble.

\tableofcontents

\section{Introduction}

 The Cauchy two-matrix model, introduced in \cite{Bertola:CauchyMM}, is a random matrix  model defined in terms of a probability measure on the space of pairs $M_1, M_2$ of $n\times n$ {\em positive semidefinite} Hermitean matrices.  This probability  measure 
depends on the choice of two scalar functions $V_1,V_2: \R_+ \to \R$, 
called the {\em potentials}, and is defined as 
\be
\d\mu(M_1,  M_2) = \frac 1{\mathcal Z_n} \frac { {\rm e}^{-N\tr (V_1(M_1) + V_2(M_2))}}{\det (M_1+ M_2)^n} \d M_1\,  \d M_2,
\ee
where $\d M = \prod_{i<j} \d \Re M_{ij} \d \Im M_{ij} \prod_{k} \d M_{kk}$ stands for the ordinary Lebesgue measure on the real vector space of Hermitean matrices. The potentials $V_j$ are supposed to grow so that $\ds \liminf_{x\to+\infty} \frac {V_j(x)}{\ln x} = + \infty$.
The parameter $N$ is a  scaling parameter which in the asymptotic regime $n\to \infty$ tends  to infinity in such a way that $\frac n N \to T\in \R_+$. We will assume henceforth $T=1$ and that $N=n$.

There are several Hermitean multi matrix models; the most studied, and possibly the first, was introduced in \cite{EynardMehta};   the interaction, instead of $\det(M_1+M_2)^{-n}$,  is ${\rm e}^{-n \tr (M_1M_2)}$ which we will refer to as the ``Itzykson-Zuber'' (IZ) interaction.  Both models have applications to the counting of colored ribbon graphs on Riemann surfaces. The IZ models are expected to display new universality behaviours in appropriate scaling regimes. Partial results supporting that expectation are appearing (i.e. \cite{Duits:2012fk}, where the authors compute the scaling behavior of the kernels near special points of transition).
We briefly remark that one natural way of generating the $\det(M_1+M_2)^{-n}$ interaction is to consider
the measure 
${\rm e}^{-N\tr (V_1(M_1) + V_2(M_2))}
 {\rm e}^{- \tr A (M_1+M_2) A^\dagger}
\d M_1 \d M_2 \d A \d A^\dagger, 
$
where $A\in Mat(n\times n,\C)$ and $\d A \d A^{\dagger}$ is the standard Lebesgue measure 
on the set of complex matrices $A$, and to integrate out the Gaussian part ${\rm e}^{- \tr A (M_1+M_2) A^\dagger}$.  

It was shown  in \cite{Bertola:CauchyMM}, using methods  of \cite{EynardMehta},  that  all correlation functions of the eigenvalues of the two matrices $M_1, M_2$ can be computed exactly  in terms of certain {\em Cauchy biorthogonal polynomials} (BOPs).
The latter consist of two sequences of polynomials $\{p_n(x), q_n(y)\}_{n\in \N}$ of exact degree $n$ with the defining properties 
\be
\int_{\R_+^2} \frac { {\rm e}^{-N(V_1(x) + V_2(y))}}{x+y} p_\ell(x) q_m(y)\d x\,  \d y = \delta _{\ell m} \ ,\ \ p_n(x) = c_n x^n + \dots, \ \ \ q_n(y) = c_n y^n + \dots. \ \  \ c_n >0.
\label{12}  
\ee

In \cite{Bertola:CBOPs} the algebraic properties of these polynomials were investigated but no concrete example which could be considered ``classical'' was provided. 
On the other hand, even before the Cauchy BOPs were introduced, an instance reducible to  such polynomials and  associated with a classical weight, appeared implicitly in  \cite{Borodin:Biorthogonal}  in the study of a (different) biorthogonal Laguerre ensemble, one of  several examples
of biorthogonal ensembles considered there  that allow an explicit computation of correlation functions.
 In this paper, we apply the formalism developed in \cite{Bertola:CauchyMM} and \cite{Bertola:CBOPs} to the model defined by 
the probability measure (the factor of $n$ has been absorbed by an obvious rescaling)
\be
\d \mu(M_1,M_2)  = \frac 1{\mathcal Z_n} \frac{\det(M_1)^a \det (M_2)^b {\rm e}^{-\tr(M_1 + M_2)}}{\det(M_1+M_2)^n} \, \d M_1\, \d M_2,  
\label{LagCMM}
\ee
with associated  biorthogonal polynomials defined by
\be
\int_{\R_+^2} \frac {x^a y^b {\rm e}^{-x-y}}{x+y} p_\ell(x) q_m(y) = \delta _{\ell m} \ ,\ \ p_n(x) = c_n x^n + \dots, \ \ \ q_n(y) = c_n y^n + \dots. \ \  \ c_n >0.   \label{CBOPdef}
\ee

The present paper has three main goals: 
\begin{enumerate}
\item obtain explicit formul\ae\ for $p_n, q_n$ and related functions;
\item find explicit formul\ae\ for the correlation functions at finite $n$;
\item formulate a scaling limit of the correlation functions near the origin and thus define a limiting random point field; because of their expressions in terms of Meijer-G functions, we call this the Meijer-G random point field.
\end{enumerate}

In one application of the formalism developed in this paper  we express the {\em joint statistics}  of the fluctuations of the smallest eigenvalues of the two matrices $M_1,M_2$ in terms of a Fredholm determinant (Sec. \ref{applications}). This is followed by a numerical evaluation and plots of the distributions of the smallest eigenvalues.
We also perform a simple probe into how the Cauchy-Laguerre two-matrix model 
relates to the Laguerre ensemble.  To this end we formulate a suitable scaling limit in which we recover the Bessel field, thus showing that in an appropriate regime the spectrum of one of the matrices behaves like the spectrum of the Laguerre ensemble. 

\br
We point out that this is the first instance of a coupled matrix model for which one can address directly and rigorously  the {\em coupled} statistics of eigenvalues in a scaling regime: the IZ multimatrix model is -to date- far from this level of detail, hampered by the lack of an effective description of all four kernels.  As a result only the spectrum of one of the two matrices can be effectively analyzed \cite{Duits:2012fk} .
\er 

For the sake of comparison we briefly review the pertinent results for 
one-matrix models \cite{MehtaBook} using, as a prototype, the Gaussian Unitary Ensemble (GUE) with  probability measure $\d\mu(M) = \frac 1{Z_n} {\rm e}^{-\frac {n}2 \tr (M^2 )} \d M$ on a space of Hermitean matrices $M$ of size $n$. 
Let $\lambda_{max}$ denote the largest eigenvalue of $M$: in the limit $n\to\infty$ the probability that $\lambda_{max}>\sqrt{2}$  is zero. 
The fluctuations around this maximum in terms of the rescaled eigenvalues $x_i  = \sqrt{2} n^\frac 23 (\lambda_i- \sqrt{2})$ are known to be expressible in terms of a determinantal random point field (DRPF, see the review in Section \ref{review}) with the Airy kernel \cite{TracyWidomLevel}
\be
K_{\Ai}(x,y):= \frac {\Ai(x)\Ai'(y)- \Ai'(y)\Ai(x)}{x-y}.  
\ee
The famous Tracy-Widom result \cite{TracyWidomLevel} connected the  probability that $x_{max}<s$   to a special solution (Hastings-McLeod) of the second Painlev\'e\ equation as follows
\begin{align*}
\lim_{n\to\infty} Prob\le(\lambda_{max} \leq \sqrt{2} + \frac {\sqrt{2}s}{2n^{\frac 23}} \ri) =  
Prob(\hbox{no $x_i$'s in } [s,\infty)) = {\rm e}^{-\int_s^\infty (x-s) q(x)^2 \d x }\\
q''(s) = s q(s) + 2q(s)^3\ ,\ \ \ q(s) \sim \Ai(s)\ , \ \ s\to+\infty.  
\end{align*}
This behaviour is now known \cite{DKMVZ} to be {\em universal}, meaning that the Airy kernel arises in a similar scaling limit near the edge of the support of the limiting distribution of eigenvalues, for a general class of  potentials $V(M)$ instead of just $M^2$.  
Moreover, it is known that the Airy DRPF describes a generic behavior near a ``soft edge''.  
The Laguerre ensemble $\d\mu(M)= \frac 1{Z_n} (\det M)^a {\rm e}^{-\tr M}$ ($M$ positive definite) possesses a ``hard edge'' at the origin of the spectrum (zero eigenvalue). The statistics of the {\em smallest} eigenvalues is determined by the Bessel DRPF near the origin  and the gap probability is related to the third Painlev\'e\ equation \cite{Tracy-Widom-Bessel}. This behaviour is  also ``generic'' in the sense that it is stable under small perturbations and occurs whenever a hard-edge in the one-matrix model is present. 


%

The Cauchy-Laguerre two-matrix model we are considering in this paper is the benchmark for the behavior of a {\em coupled} random matrix model with a hard-edge and thus plays the same role as the Laguerre ensemble in relation to one-matrix models. We shall see that not only  can the model be completely elucidated  in terms of special functions, but also its scaling behaviour near the hard edge can be expressed in terms of  a DRPF as in \eqref{1-12}, with kernels described in terms of the generalized hypergeometric functions of type $_2F_2$, 
somewhat reminiscent of kernels and gap probabilities considered in \cite{BorodinDeift}.

In  \cite{BGSasympt} we have shown that  the spectrum of each of the matrices in the large $n$ limit leads to the same Airy-kernel  universality (or other universality classes that already appear in the one-matrix model) as long as the limiting eigenvalue distributions of the spectra does not contain the origin in its support:  therefore the largest (and smallest) eigenvalue distributions do not differ from the one-matrix case.

By contrast,  the model we study in this paper falls outside of that universality class: the limiting eigenvalue density was described in (\cite{BertolaBalogh}, Sec. 6) and near the origin it behaves like $x^{-\frac 23}$.  It is therefore natural to expect both new types of kernels as well as new types of gap distributions (see Remark \ref{remdens} and Fig. \ref{FigDensComp}).

The next section reviews the notions of a determinantal random point field, gap probabilities and their computation in terms of Fredholm determinants. 
Section \ref{summary} contains the results of computations involving special functions: the proofs of these results are in Sections \ref{setup} and \ref{CLBOPS}. The appendices contain further results of technical nature and some background material used in the main text.

\subsection{Short review of Determinantal Random Point Fields}
\label{review}
We review the fundamental notion  of {\em a random point field} (RPF) following \cite{Soshnikov-rev}. 
Let $X$ be a  topological  space, called {\em a configuration space}; in our case it shall be  $X=\R_+\sqcup \R_+$   equipped with the measure induced from the Lebesgue measure on each copy of $\R_+$ so that we can define $L^2(X)$. A {\em configuration} $\xi$ is  a locally finite collection of points of $X$.  A {\em random point field} on $X$ is a probability measure on the set of all configurations of points.  If $X$ is a disjoint union of $j$ sets we 
will call a random field  a $j$-level random point field. Given a Borel set $A\subset X$ we denote by $\sharp_A$ the integer-valued random variable counting the number of points in $A$. Given disjoint sets $A_1,\dots, A_m$  and a multi-index $\vec k\in \N^m$ one defines the {\em $k$-points correlation functions $\rho_k$ } by the formula ($\mathbb E$ denotes the expectation value)
\be
\mathbb E \prod_{i=1}^m \le(\frac{(\sharp_{A_i})!}{(\sharp_{A_i} - k_i)!}\ri) = \int_{A_1^{k_1} \times \cdots \times A_m^{k_m}} \rho_k(x_1,\dots, x_k) \d x_1\cdots \d x_k.  
\ee
The nontrivial fact  is that  the collection of correlation functions implicitly {\em defines} the probability measure on the space of all possible local configurations \cite{Soshnikov-rev}. 
 A (two-level) RPF on $X=\R_+\sqcup \R_+$ is  {\em a determinantal}  RPF (DRPF) if  all its correlation functions are determinants (see Definition 3' in \cite{Soshnikov-rev}) of the form 
\be
\rho_{(r,s)}(x_1,\dots, x_r; y_1,\dots, y_s) = \det\le[
\begin{array}{c|c}
\big[R^{(++)}   (x_i,x_j)\big]_{i,j\leq r} & \big[R^{(+-)}  (x_i, y_j) \big]_{i\leq r, j\leq s}\\[10pt]
\hline
\rule{0pt}{16pt}\big[R^{(-+)} (y_i,x_j)\big]_{i\leq s, j\leq r} & \big[R^{(--)}  (y_i, y_j) \big]_{i,j \leq s}  
\end{array}
\ri], 
\label{1-12}
\ee
where the functions $R^{(\pm \pm)}:\R_+^2\to \R$ are called ``kernels'' and  together they give rise to a single kernel $R: (\R_+\sqcup \R_+)^2 \to \R$. 
Thus {\em to define a DRPF it is sufficient to display its kernels}: we shall do this for both finite $n$ as well as for the scaling limit near the origin.

The eigenvalues of two positive definite matrices $M_1,M_2$ (which we denote by  $x_1,\dots,x_n$ and $y_1,\dots, y_n$) constitute an example of such DRPF. 
Given  a determinantal point field and a Borel subset $A$,  the associated "gap probability" is the probability that there are {\em no points} in $A$  and it is computed as follows. The kernel $R$ defines an integral operator $\mathcal R$ on $L^2(X)$.  Then the gap probability is given by (see \cite{Soshnikov-rev})
\be
Prob(\hbox{no points in } A) = \det \le(Id_{L^2(A)} -\pi_A \mathcal R \pi_A\ri), 
\ee
where $\pi_A:L^2(X)\to L^2(A)$ is the projection defined by restriction and the determinant is a Fredholm determinant.  

\section{The kernels for finite and infinite $n$: Meijer-G field}
\label{summary}
We recall the results of \cite{Bertola:CauchyMM}  (collected and explained in Appendix \ref{Correlapp}, in particular \eqref{eq:Hkernels}). The correlation functions of the eigenvalues in the Cauchy two-matrix model are expressed as determinants
\be
\rho_{(k,\ell)}(x_1,\dots, x_k;y_1,\dots,y_\ell) = \det\le[
\begin{array}{c|c}
{\ds H^{(n)}_{01}(x_i,x_j)\atop 
1\leq i,j\leq k} & 
{\ds H^{(n)}_{00}(x_i, y_j)
\atop 
1\leq i\leq k; 1\leq j \leq \ell}
\\[10pt]
\hline\\[1pt]
{\ds H^{(n)}_{11}(y_i,x_j) \atop 1\leq i\leq \ell ; 1\leq j \leq k}& 
{\ds H^{(n)}_{10}(y_i, y_j) \atop 
1\leq i,j\leq \ell}
\end{array}
\ri], 
\label{2-1}
\ee
where the kernels $H^{(n)}_{\rho\mu}$  are given by   (in the notation of \eqref{1-12})
\begin{subequations}\label{eq:Hs} 
\begin{align} 
R^{(++)} = H^{(n)}_{01}(x,y)& := x^{a } {\rm e}^{-x} K^{(n)}_{01}(x,y), &
R^{(+-)} = H^{(n)}_{00}(x,y) &:= x^{a }  y^{ b } {\rm e} ^{-x -y}  K^{(n)}_{00}(x,y), 
 \\
  R^{(-+)} =H^{(n)}_{11}(x,y) &:= K^{(n)}_{11}(x,y), & R^{(--)} = H^{(n)}_{10}(x,y) &:=  y^{ b } {\rm e}^{-y}  K^{(n)}_{10}(x,y),
\end{align}
\end{subequations} 
while the kernels $K^{(n)}_{\mu\nu}$ are defined in terms of the Cauchy biorthogonal polynomials $p_n,q_n$  as:
\begin{subequations}\label{eq:Ks}
\bea
&\ds K^{(n)}_{01}(x,y) := \int_{\R_+} \!\! K^{(n)}_{00}(x,y') \frac { y'^b {\rm e}^{-y'}\d y'}{y+y'}, &
K^{(n)}_{00}(x,y):= \sum_{j=0}^{n-1} p_j(x) q_j(y),
\nonumber 
\\
&\ds K^{(n)}_{11}(x,y) := \int_{\R_+^2} \!\! K^{(n)}_{00}(x',y') \frac { x'^a {\rm e}^{-x'}\d x'}{x+x'}\frac { y'^b {\rm e}^{-y'}\d y' }{y+y'}  - \frac 1{x+y} , &
K^{(n)}_{10}(x,y) := \int_{\R_+} \!\! K^{(n)}_{00}(x',y) \frac { x'^a {\rm e}^{-x'}\d x'}{x+x'}.  \nonumber 
\eea
\end{subequations}

Our first main result is a compact expression for these four kernels at finite $n$; 
to present it we set $\alpha=a+b$ and define two functions: 
\bea
&\& H_{c,n}(z) := 
\int_{\gamma} \frac {\d u }{2\pi i}  
\frac {\Gamma(u+c) \Gamma(n + \alpha-c  + 1 - u)}{\Gamma (1 - u)\Gamma(c+n+u)\Gamma(\alpha-c - u + 1)}z^{-u}, 
\label{Hn}
\\
&\& \wt H_{c,n}(w):=  \int_{ \gamma} \!\!\frac {\d u}{2\pi i} 
 \frac { \Gamma(u)\Gamma(u+c)\Gamma  \left( n+\alpha-c+1-u \right) }{ \Gamma(\alpha-c  + 1-u)  \Gamma  \left( c+n+u \right)} w^{-u}.  
  \label{wtHn}
\eea

In the above expressions 
$\gamma$ is a contour originating at $-\infty$ in the lower half-plane and returning to $-\infty$ in the upper half-plane in such a  way as to leave all the poles of the $\Gamma$ functions in the numerator containing the variable $+u$ inside the contour, while leaving those of the $\Gamma$ functions of variable $-u$ outside. Such types of integrals are Mellin-Barnes integrals and the expressions above are special cases of Meijer-G functions (see \cite{Bateman1}, 5.3, p. 206).  
Then 
\bt
\label{Hkernelthm}
The kernels $H^{(n)}_{\mu\nu}
$ (and thus the correlation functions \eqref{2-1}) are given by 
\begin{align*}
H^{(n)}_{01}(x,y) & = {\rm e}^{-x+y}\int_0^1 H_{a,n}(tx)\wt H_{b,n}(ty) \d t,&  
H^{(n)}_{00}(x,y)& ={\rm e}^{-x-y}  \int_0^1 H_{a,n}(tx)H_{b,n}(ty) \d t, 
\\
H^{(n)}_{11}(x,y)  & =   {\rm e}^{x+y}\int_0^1 \wt H_{a,n}(tx) \wt H_{b,n}(ty)  \d t  - \frac{{\rm e}^{x+y}}{x+y}, &
H^{(n)}_{10}(x,y) & ={\rm e}^{x-y}\int_0^1 \wt H_{a,n}(tx)H_{b,n}(ty) \d t, 
\end{align*}
while the kernels $K^{(n)}_{\mu\nu}$ are written in Theorems \ref{K00thm},\ref{K10thm},\ref{K11thm}.
\et
The above theorem is a summary of Theorems \ref{K00thm},\ref{K10thm},\ref{K11thm} which  contain computations of the kernels $K_{\mu\nu}^{(n)}$, whereas the ensuing expressions for the kernels $H_{\mu\nu}^{(n)}$ are obtained by a simple rewrite using the definitions \eqref{eq:Hs}, \eqref{eq:Ks}, \eqref{Hn}, \eqref{wtHn}, and the functions $G_{c,n}$ and $\wt G_{c,n}$ appearing in Theorems \ref{K00thm},\ref{K10thm},\ref{K11thm}. In particular,  $H_{c,n}(\zeta) = \zeta^c G_{c,n}(\zeta)$  and $\wt H_{c,n}(\zeta) = \zeta^c \wt G_{c,n}(\zeta)$.

\subsection{Scaling limit: the Meijer-G random point field}
In the limit $n\to \infty$, with the substitutions $ x:= \frac \zeta {n^2}\le(\frac {n}{n+1}\ri)^\alpha ,\ y:= \frac \xi {n^2}\le(\frac {n}{n+1}\ri)^\alpha$,  we arrive\footnote{The unusual scaling with the factor $(\frac {n}{n+1})^{\alpha}$ is used to have a higher order of approximation as $n\to\infty$.}  at  a novel universality class of random point fields that we name  the {\em Meijer-G random point field}.
\bd
\label{defHfun}
Let 
\bea
  H_{c}(\zeta)&\& :=
   \int_{\gamma} \frac {\d u }{2i\pi} \frac {\Gamma(u+c)}{\Gamma ( 1 - u)\Gamma(\alpha-c +1- u )}\zeta^{-u};
 \ \ \ \ 
 \wt  H_{c}(\zeta) :=
   \int_{ \gamma} \frac {\d u}{2i\pi} 
 \frac { \Gamma(u)\Gamma(u+c)}{ \Gamma(\alpha-c  + 1-u)} \zeta ^{-u}.
 \label{defwtH}
 \eea
 The contour $\gamma$ is a contour of the form in Fig. \ref{MeijerContour} enclosing all the poles in the numerators of the integrands.
 Note that $H_c, \wt H_c$ are Meijer-G functions as in \cite{Bateman1} and \href{http://dlmf.nist.gov/16.17} {16.17 in DLMF} 
\footnote{DLMF="Digital Library of Mathematical Functions", http://dlmf.nist.gov}
(see definition in App. \ref{MGapp}).
\ed

\bt[Meijer-G two-level  random point field and universality class]
\label{MeijerGKprime}
In the scaling limit  the correlations of the eigenvalues of $M_1,M_2$ are determined by the two--level random point field 
on the configuration space $\R_+\sqcup \R_+$ with the kernels below (in the notation of \eqref{1-12})
\bea
R^{(++)} = \lim_{n\to \infty}\frac 1{n^2}\le(\frac {n}{n+1}\ri)^\alpha H^{(n)}_{01}\le(x,y \ri) &\&  = \mathcal G_{01}(\zeta, \xi),
\quad 
R^{(+-)}= \lim_{n\to \infty}\frac 1{n^2}\le(\frac {n}{n+1}\ri)^\alpha H^{(n)}_{00} \le(x, y\ri) = \mathcal G_{00}(\zeta, \xi),
 \nonumber \\
R^{(-+)} = \lim_{n\to \infty}\frac 1{n^2} \le(\frac {n}{n+1}\ri)^\alpha H^{(n)}_{11}\le(x,y\ri)  &\& = \mathcal G_{11}(\zeta, \xi),\quad
R^{(--)}=
\lim_{n\to \infty}\frac 1{n^2}\le(\frac {n}{n+1}\ri)^\alpha H^{(n)}_{10}\le(x,y\ri) =  \mathcal G_{10}(\zeta, \xi), 
  \nonumber 
\eea
where  the points in the first copy of $\R_+$ will be called the "$+$" field, and the others the "$-$" field.   The kernels are
\bea
 \mathcal G_{01}(\zeta,\xi)  = \int_0^1 H_{a}(t\zeta) \wt H_{b}(t\xi) \d t,\qquad
 \mathcal G_{00} (\zeta,\xi) =  \int_0^1 H_{a}(t\zeta) H_{b}(t\xi) \d t, 
\label{g00g01} \\
 \mathcal G_{11}(\zeta,\xi)  = 
\int_0^1 \wt H_{a}(t\zeta) \wt H_{b}(t\xi ) \d t  - \frac{1}{\zeta+\xi}, 
\qquad 
 \mathcal G_{10}(\zeta,\xi) =  \int_0^1 \wt H_{a}(t\zeta) H_{b}(t\xi) \d t, 
\label{g10g11}
\eea
with $H_c,\wt H_c$ as in Definition \ref{defHfun}. The convergence is uniform for $\xi, \eta$ within compact sets and the error of the approximation is within $\mathcal O(n^{-2})$. 
\et
 Proposition \ref{propConcomitant} provides alternative expressions for the kernels $\mathcal G_{\mu\nu}$ in terms of ``point-split bilinear concomitants'', involving no integration, only derivatives.
 Section \ref{MGrpf} is devoted to the proof of Theorem \ref{MeijerGKprime}.
We expect the following conjecture to be true.  
\begin{conjecture}\label{conj:Meijer}
The Meijer random field obtained in the scaling limit in this paper 
is universal within the class of Cauchy matrix models  of the form 
\begin{equation*}
\d \mu(M_1,M_2) =\frac 1{\mathcal Z_n} \det (M_1)^a \det (M_2)^b \frac {{\rm e}^{-n \tr \left (V_1(M_1)+ V_2(M_2)\right)}}{\det(M_1 + M_2)^n}\d M_1 \d M_2
\end{equation*}
with $V_i$ analytic near the origin and the scaling $x\mapsto x\,n^{-3}$.
 \end{conjecture}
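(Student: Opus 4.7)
The plan is to approach Conjecture \ref{conj:Meijer} via a Deift--Zhou steepest descent analysis of the $4\times 4$ Riemann--Hilbert problem (RHP) that characterizes the Cauchy biorthogonal polynomials for general analytic potentials, following the philosophy established for one-matrix models in \cite{DKMVZ} and adapting it to the Cauchy BOP framework of \cite{Bertola:CBOPs}. The structural fact that makes such an approach plausible is that the integral expressions in Definition \ref{defHfun} and Theorem \ref{Hkernelthm} should be recognized as the entries of the local parametrix at the origin in the pure Laguerre case, so the task is to show that for analytic $V_1,V_2$ the same local model is forced by matching conditions.

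First, I would invoke the $4\times 4$ RHP characterization of Cauchy BOPs from \cite{Bertola:CBOPs}, in which the $(p_n,q_n)$ and their Cauchy transforms against the weights $x^a\mathrm{e}^{-nV_1(x)}$ and $y^b\mathrm{e}^{-nV_2(y)}$ enter as entries, and the coupling $1/(x+y)$ appears in one of the off-diagonal blocks. Second, I would determine the limiting spectral measure for general $V_j$: this requires solving the vector energy/equilibrium problem with logarithmic self-interaction and attractive Cauchy-type cross-interaction (as in \cite{BertolaBalogh}, Sec.~6, for $V_j(x)=x$), and verifying that under the analyticity hypothesis on $V_j$ near $0$ the equilibrium densities of both spectra still possess the $x^{-2/3}$ behavior at the hard edge. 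This is a robust feature of the variational problem because the singular integral operator governing the Euler--Lagrange equations is the same; only the external field changes, and analyticity of $V_j$ near zero guarantees that perturbations of the external field do not alter the local index of the singular integral equation at the endpoint $0$.

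Third, using the resulting $g$-functions I would normalize the RHP at $\infty$ and open lenses to transform it into a problem whose jumps are exponentially close to the identity away from the edges of the supports. I would then construct (i) an outer parametrix determined by the global geometry and (ii) local parametrices: a Meijer-G parametrix at the hard edge $0$, assembled from the functions $H_c,\widetilde H_c$ of Definition \ref{defHfun}, and Airy-type parametrices at any soft edges (which exist already in \cite{BGSasympt}). The Meijer-G parametrix would be defined in the disk $|x|<n^{-\epsilon}$ for suitable $\epsilon$ by declaring its entries to be explicit Mellin--Barnes integrals that solve the same local jump conditions as the normalized RHP; the scaling $x=\zeta n^{-3}$ is dictated by requiring that the Meijer-G local variable $\zeta$ be order one. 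Matching this local parametrix to the outer one on the boundary circle then yields an error of order $n^{-1+\epsilon}$ in a small-norm RHP, uniformly in $\zeta,\xi$ on compact sets.

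The main obstacle is two-fold. The first difficulty is the explicit construction of the Meijer-G local parametrix for a $4\times 4$ RHP: unlike the Bessel parametrix in the one-matrix Laguerre case, one must arrange $4\times 4$ solutions built from $H_a,\widetilde H_a,H_b,\widetilde H_b$ whose prescribed jumps \emph{exactly} reproduce those of the model RHP on all four rays through the origin while also possessing the correct behavior on both copies of $\R_+$ corresponding to the two eigenvalue species. The second difficulty is the analysis of the vector equilibrium problem for general analytic $V_j$: proving one-cut regularity near the origin and the universal $x^{-2/3}$ density profile without an explicit formula requires a delicate study of the associated spectral curve (of genus zero for small analytic perturbations of the Laguerre case). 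Once these two steps are in place, the expression of the correlation functions in terms of the $H^{(n)}_{\mu\nu}$ kernels and the uniform convergence of the resulting solution of the small-norm RHP deliver the $\mathcal G_{\mu\nu}$ kernels of Theorem \ref{MeijerGKprime}, establishing the conjecture.
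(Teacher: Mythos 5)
First, note what the paper itself does with this statement: it is stated as Conjecture \ref{conj:Meijer} and is explicitly \emph{not proven} — the authors only write that they ``expect the following conjecture to be true,'' and the body of the paper establishes the Meijer-G field solely for the exactly solvable Laguerre weights via Mellin--Barnes computations. Your proposal therefore cannot be compared to a proof in the paper; it must stand on its own, and as written it does not. It is a research program (and, to its credit, essentially the right one: a Deift--Zhou analysis of the Riemann--Hilbert problem for Cauchy BOPs with a vector equilibrium problem and a local parametrix at the hard edge is how one would attack this), but the two steps that constitute the entire mathematical content of the conjecture are precisely the ones you defer as ``the main obstacle'': construction of the Meijer-G local parametrix with exact jump matching, and the proof that the equilibrium density retains the $x^{-2/3}$ profile at the origin. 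Labeling these as difficulties does not discharge them.

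Two of your assertions are, moreover, wrong or unjustified as stated. (i) The claim that ``analyticity of $V_j$ near zero guarantees that perturbations of the external field do not alter the local index'' of the equilibrium problem is false without further hypotheses: for a perfectly analytic potential such as $V_1(x)=(x-1)^2$ with a deep well away from $0$, the support of the equilibrium measure can detach from the origin, turning the hard edge into a soft edge (Airy, not Meijer-G), and the paper's own Remark \ref{remdens} and \eqref{denscurves} show that even within this model family the edge exponent can degrade from $z^{-2/3}$ to $z^{-1/2}$ when the weight suppresses one matrix near the origin, with the scaling limit collapsing to Bessel (Theorem \ref{toB}). So the conjecture implicitly requires, and your proof must actually establish, that both densities are supported at $0$ with the cube-root branch point of the three-sheeted spectral curve intact — a genuine spectral-curve analysis, not a robustness remark. (ii) The RHP for Cauchy biorthogonal polynomials in \cite{Bertola:CBOPs} is $3\times 3$, not $4\times 4$, consistent with the third-order ODE $\Delta_\zeta(\Delta_\zeta+\alpha-c)(\Delta_\zeta-c)f=\mp\zeta f$ satisfied by $H_c,\wt H_c$ and with the $3\times 3$ concomitant matrix of Proposition \ref{propConcomitant}; your parametrix step would also need the large-$\zeta$ asymptotics of the Meijer-G solutions in \emph{all} sectors (a Stokes analysis) to match the outer parametrix on the boundary circle, plus a solvability/vanishing-lemma argument for the model problem — none of which is supplied. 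In short: right strategy, but the proposal contains no proof of the conjecture, and at least one of its load-bearing claims fails without additional hypotheses.
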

\section{Applications}
\label{applications}
The simplest statistical information is the density of eigenvalues both for finite $n$ and in the scaling limit, in either case obtained directly from the kernels; for the first matrix (similar expression holds for the second matrix)
\be
\rho_1^{(n)} (x) =H^{(n)}_{01} (x,x) =  \int_0^1 t^{a+b} 
G^{1,1}_{2,3} \left(\left.{-\alpha -n,n\atop 0,-a, -\alpha}\; \right | t x\right) 
G^{2,1}_{2,3} \left(\left.{-\alpha -n,n\atop 0,-b,-\alpha}\; \right | tx \right) \d t
\ee
which follows from the expression in Theorems \ref{Hkernelthm}, \ref{K00thm}, \ref{K10thm} (see \eqref{G}, \eqref{wtG}).
For large $n$ (in fact even for small $n$'s) and $x = \frac \zeta {n^2} \le(\frac {n}{n+1}\ri)^{\alpha}$
\be
\frac 1 {n^2}\le(\frac {n}{n+1}\ri)^{\alpha} \rho_1^{(n)}\le(x\ri) \m{=}^{\tiny {Corollary \ref{MeijerGH}}} 
\int_0^1 G^{1,0}_{0,3} \left(\left.{\atop a,0, -b}\; \right | t\zeta  \right)
G^{2,0}_{0,3} \left(\left.{\atop b,0, -a}\; \right | t\zeta \right)\d t + \mathcal O(n^{-2}) \label{goodapp}
\ee
A more effective formula is obtained from Proposition \ref{propConcomitant}, which involves only derivatives (the expression is cumbersome, so we have opted here  for the integral expression instead).
Figure \ref{figdensity} compares the exact density (solid line) with the asymptotic density as per \eqref{goodapp}.
\begin{figure}[t]
\includegraphics[width=0.33\textwidth]{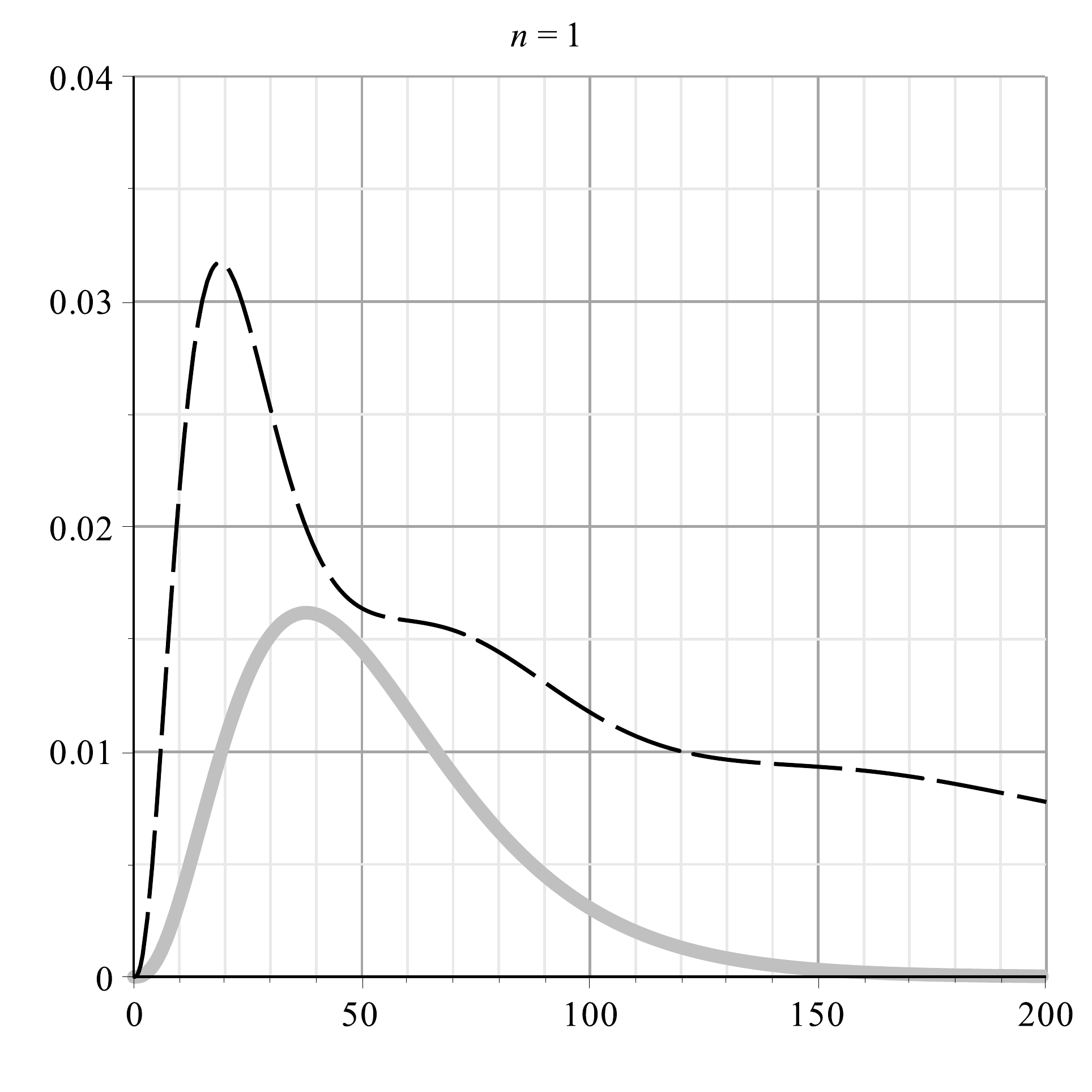}
\includegraphics[width=0.33\textwidth]{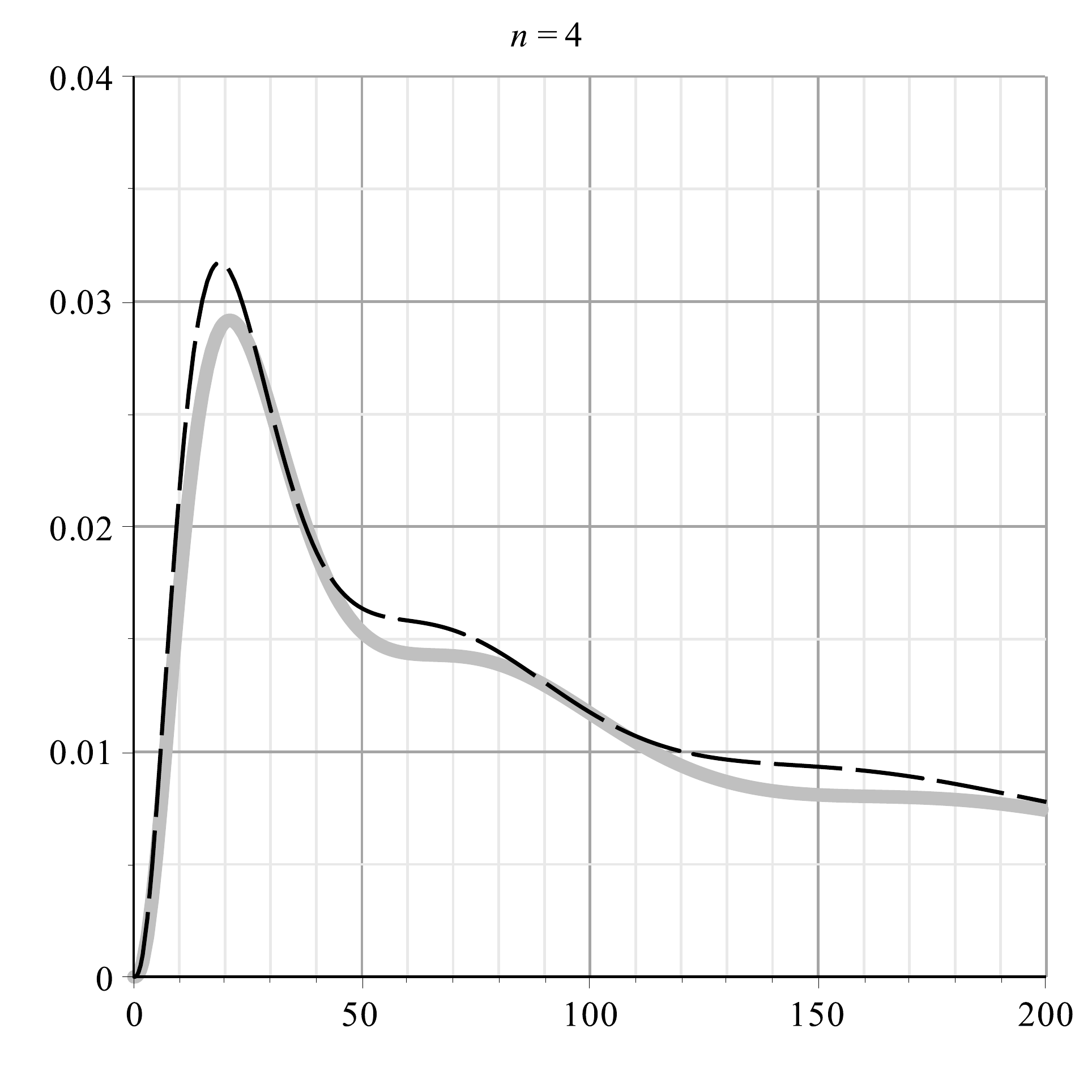}
\includegraphics[width=0.33\textwidth]{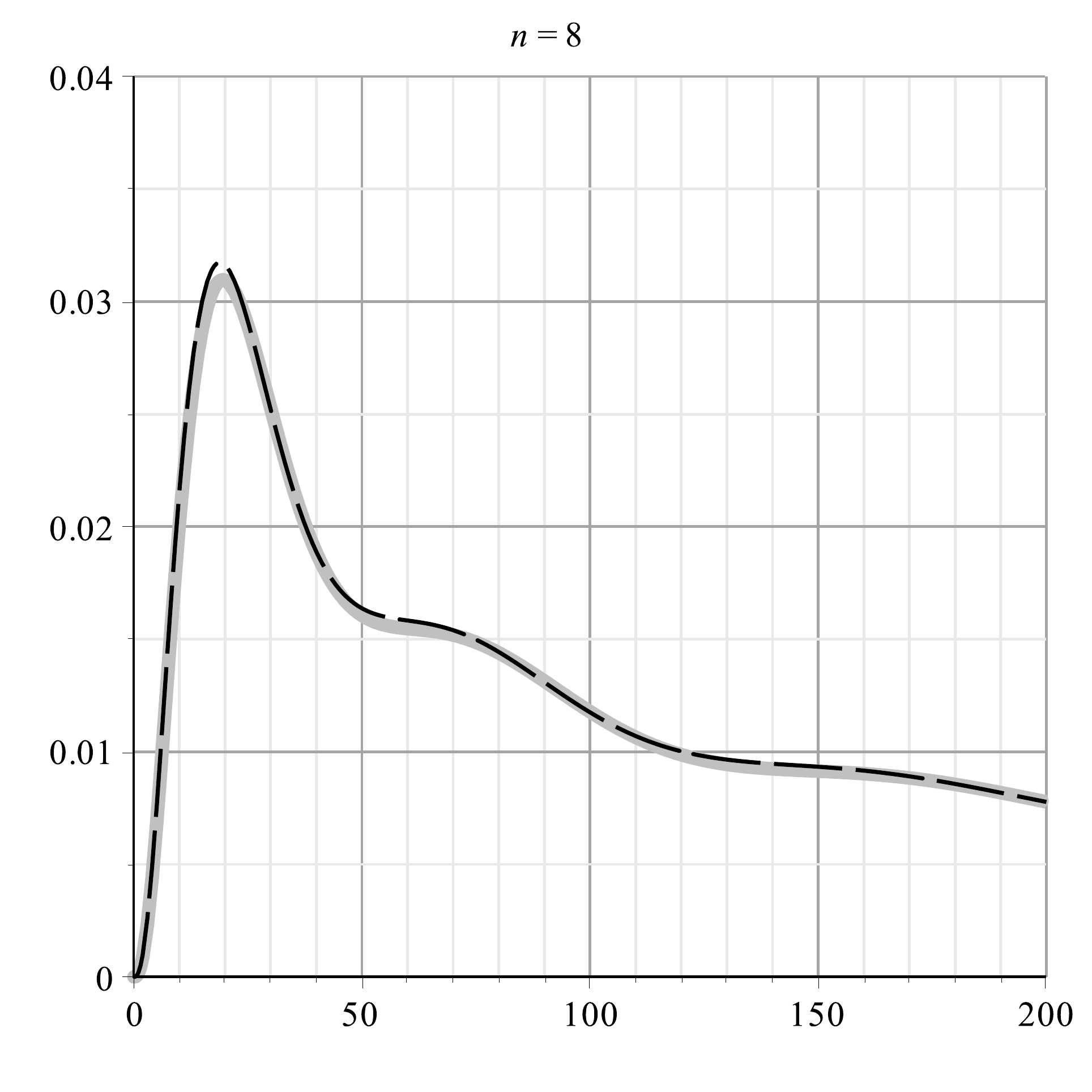}
\caption{The exact density for $n=1,4,8$ versus the asymptotic formula \eqref{goodapp}. Here $a = 3, b=1$  }
\label{figdensity}
\end{figure}
\subsection{The distribution of the smallest eigenvalues}
According to the general theory outlined in Section \ref{review}, the probability that the smallest eigenvalue of $M_j$ is greater than some $x>0$ can be expressed in terms of a Fredholm determinant. 
Denote by $\mathbb P_n$ the probability measure \eqref{LagCMM} on the space of pairs $M_1,M_2$ of positive-definite Hermitean matrices of size $n\times n$.
We give here two examples.
\paragraph{Probability that $\bf spect (M_1)\boldsymbol{\subset \le[\frac s{n^2},\infty\ri)}$.}
\begin{wrapfigure}{r}{0.4\textwidth}
\vspace{-20pt}
\includegraphics[width=0.3\textwidth]{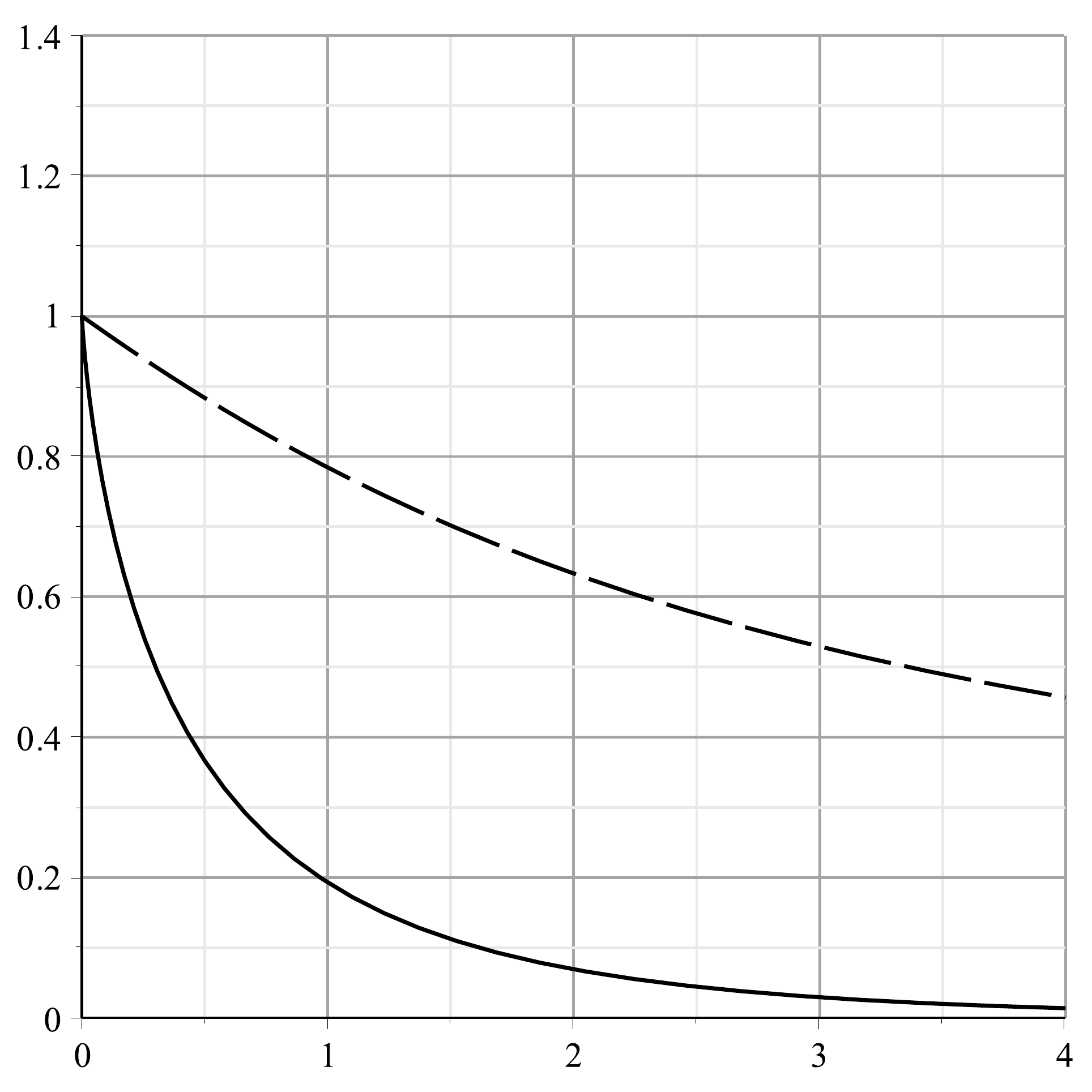}
\caption{Comparison between the gap probabilities of the Meijer-G (solid) field and the Bessel field (dashed).}
\label{MeijerBessel}
\end{wrapfigure}
Denote by $0\leq  x_1\leq x_2\leq \dots$ the eigenvalues of $M_1$ and $0\leq y_1\leq y_2\leq \dots$ those of $M_2$. 
Then our results on the scaling limits of the kernel imply the following
\bea
\lim_{n\to \infty} \mathbb P_n\le(x_1> \frac s{n^2} \ri) = \det\le[Id_{L^2([0,s])} - \mathcal R_{++} \ri]=:F_1^{(1)}(s)
\label{F1}
\eea
where $\mathcal R_{++}$ is the integral operator with the kernel $R^{(++)}(\zeta,\xi) = \mathcal G_{01}(\zeta,\xi)$ defined in \eqref{g00g01} restricted to the interval $[0,s]$, with a similar definition of $F_1^{(2)}$ (see footnote\footnote{The superscript $^{(1)}$ in $F_1^{(1)}$ refers to the matrix $M_1$, with a similar definition for $F_1^{(2)}(t):= \lim_{n\to\infty}\mathbb P_n\le(y_1> \frac t{n^2} \ri) $.}).

\begin{figure}[t]
\includegraphics[width=0.33\textwidth]{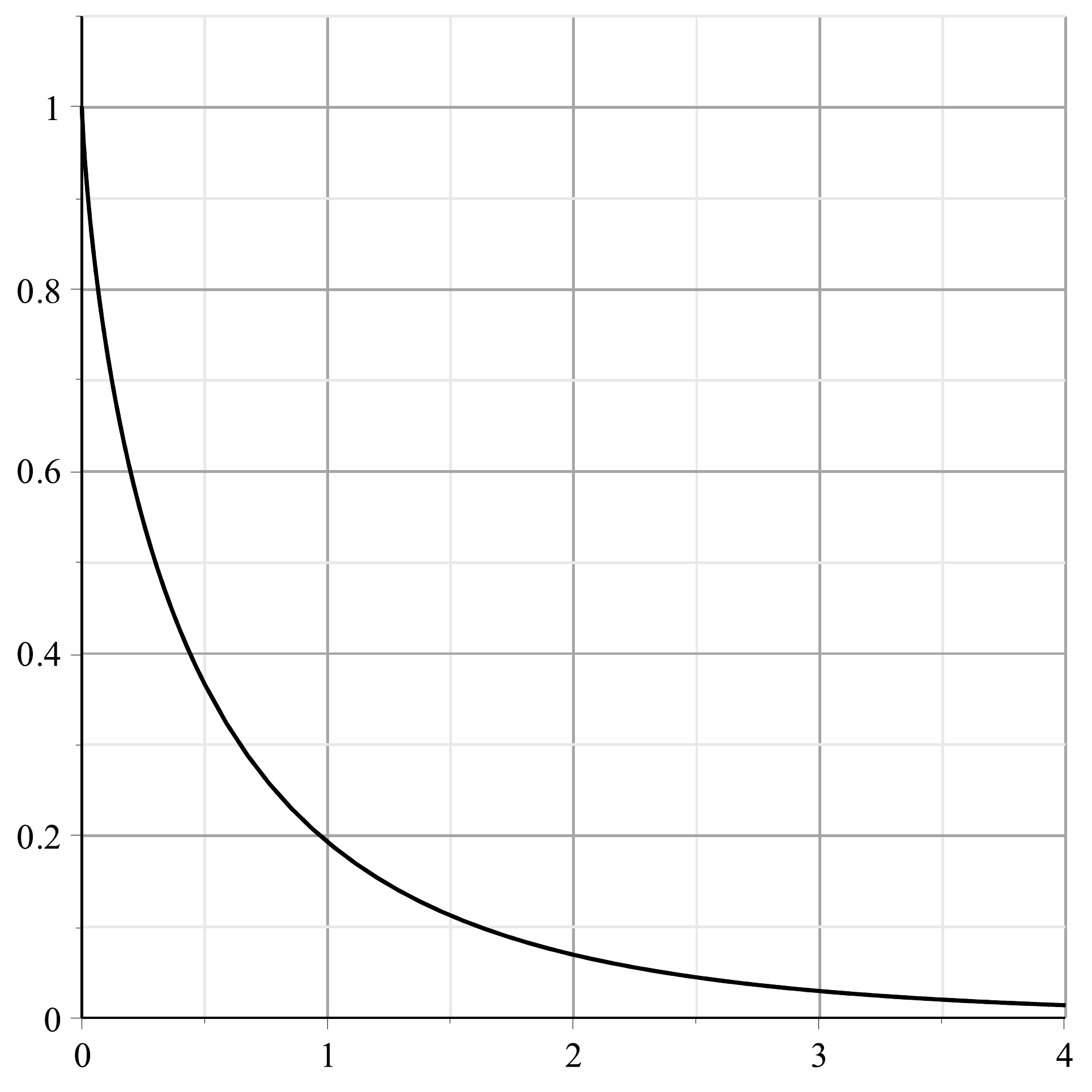}
\includegraphics[width=0.33\textwidth]{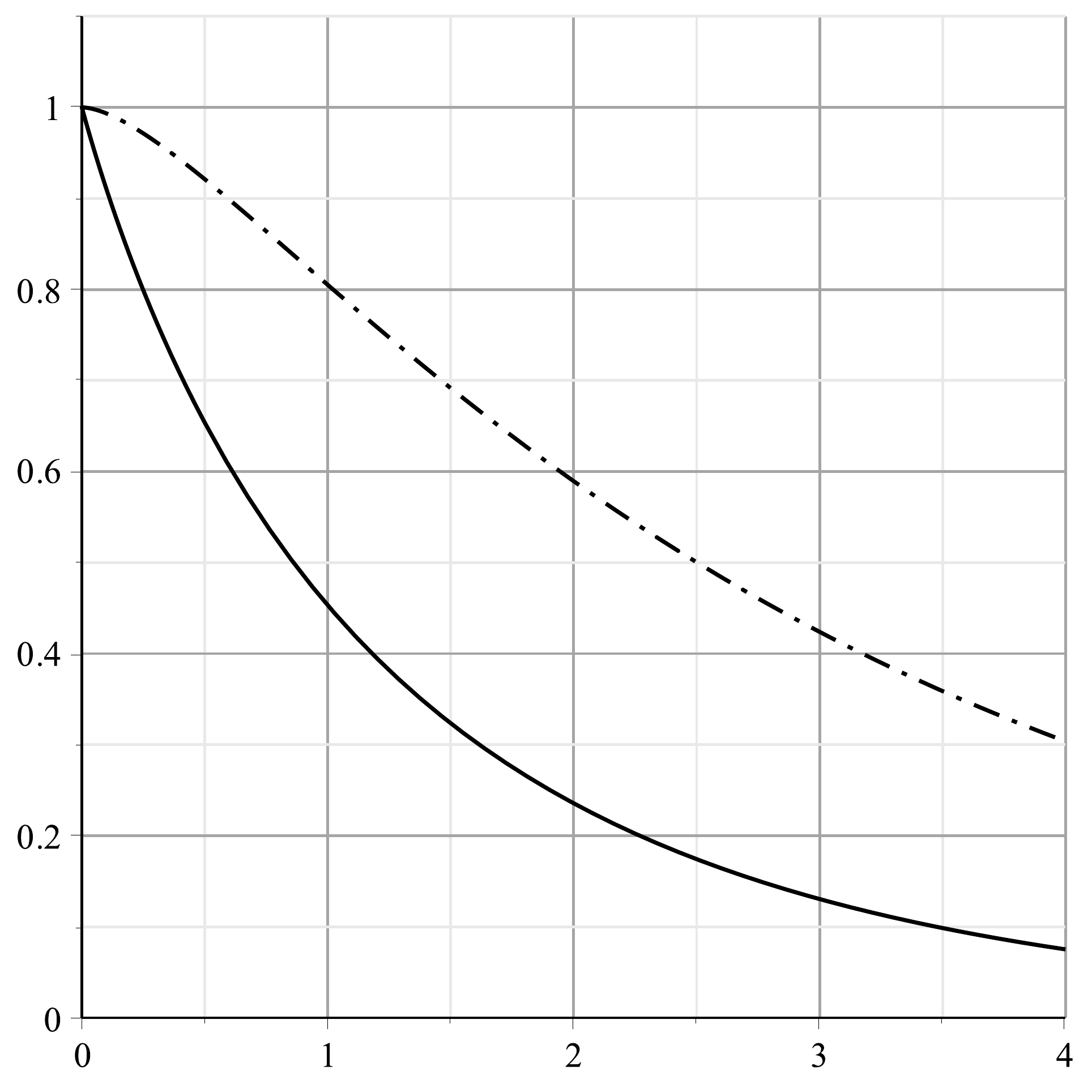}
\includegraphics[width=0.33\textwidth]{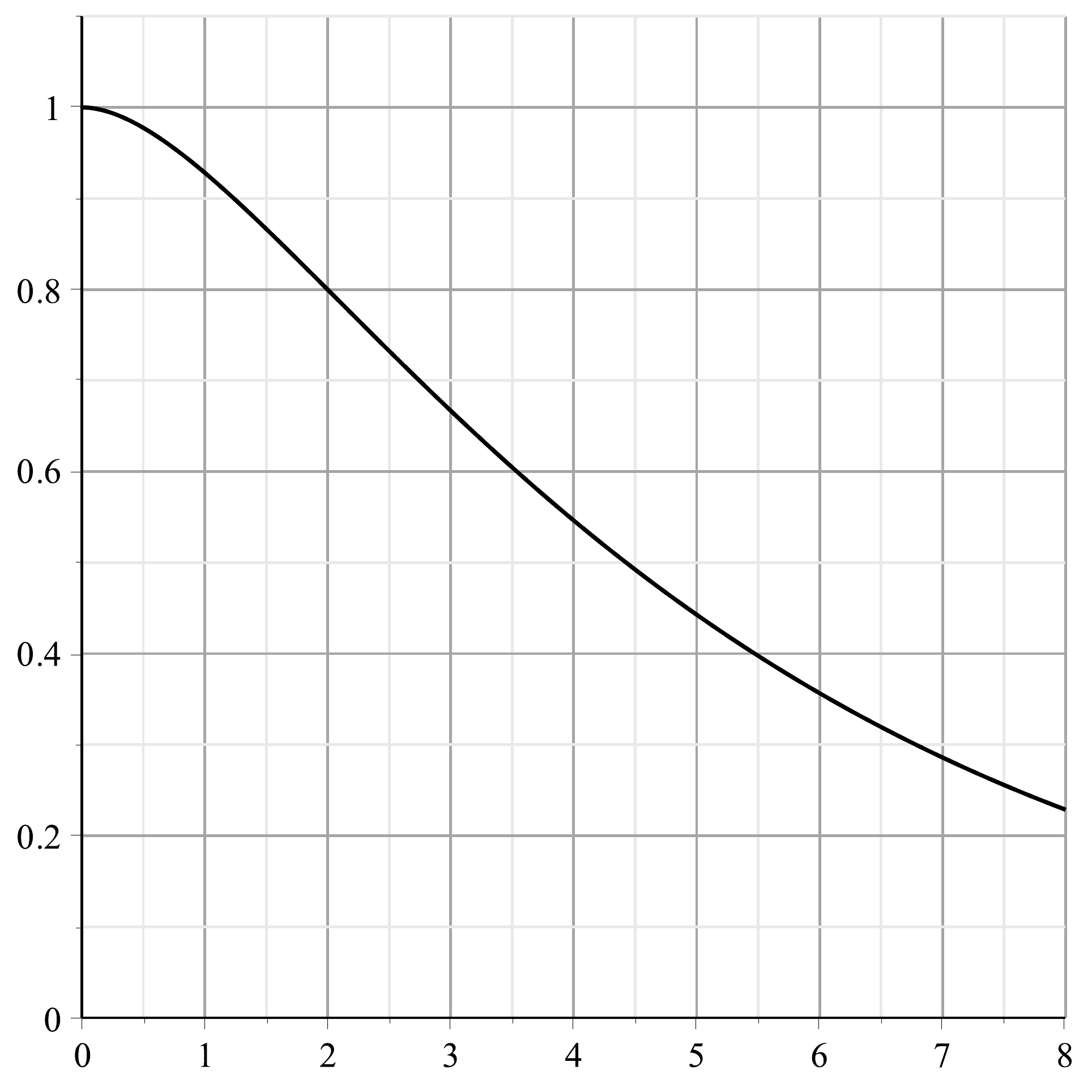}
\caption{The gap probability $F_1^{(1)}$ \eqref{F1} (and $F_1^{(2)}$ in dashed) computed numerically as explained in \cite{Bornemann}. From left to right,  $a = 0, b=0$,  $a = 0, b=1$ and $a=1,b=1$.}
\label{FigFred}
\end{figure}
\begin{wrapfigure}{r}{0.30\textwidth}
\vspace{-20pt}
\includegraphics[width=0.25\textwidth]{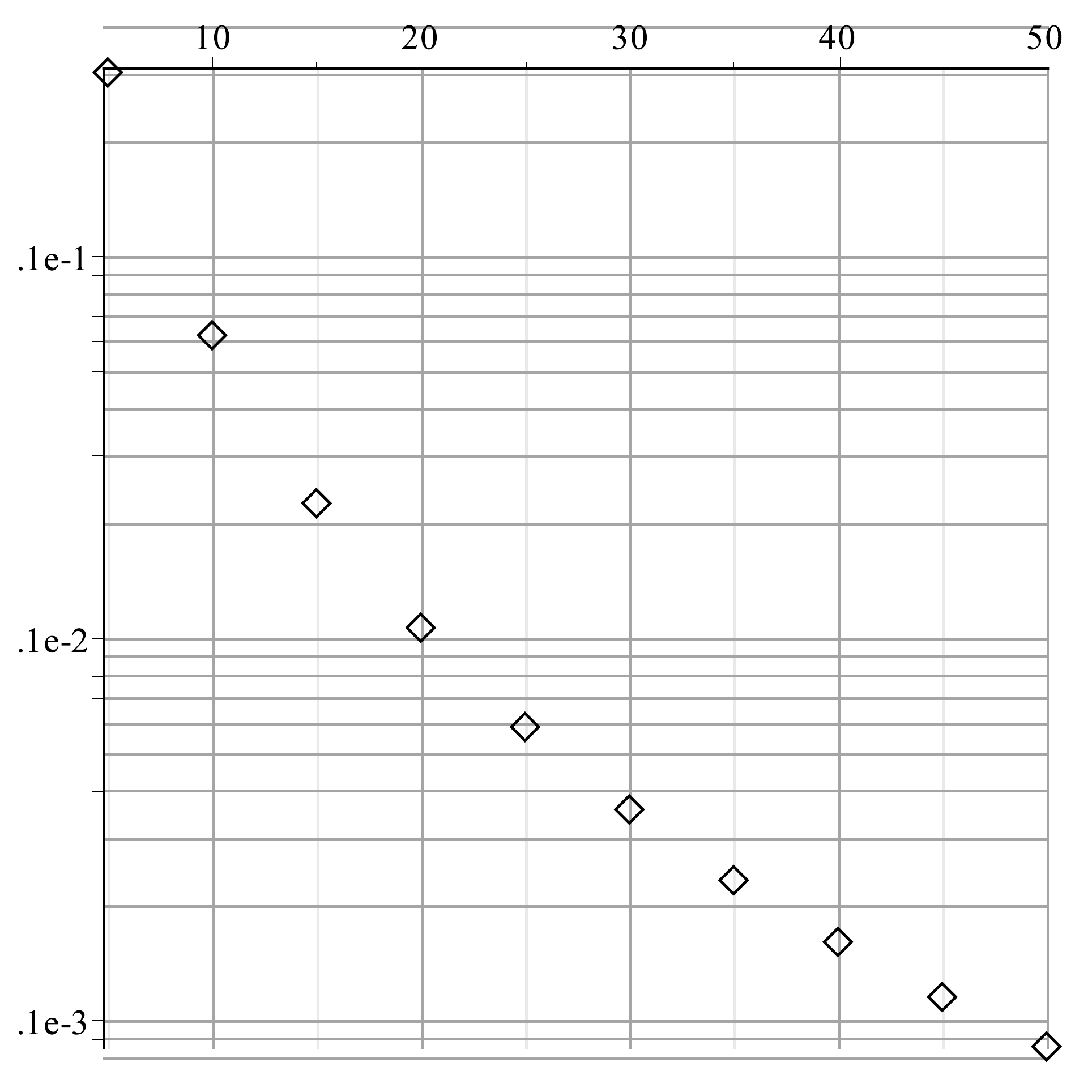}
\includegraphics[width=0.25\textwidth]{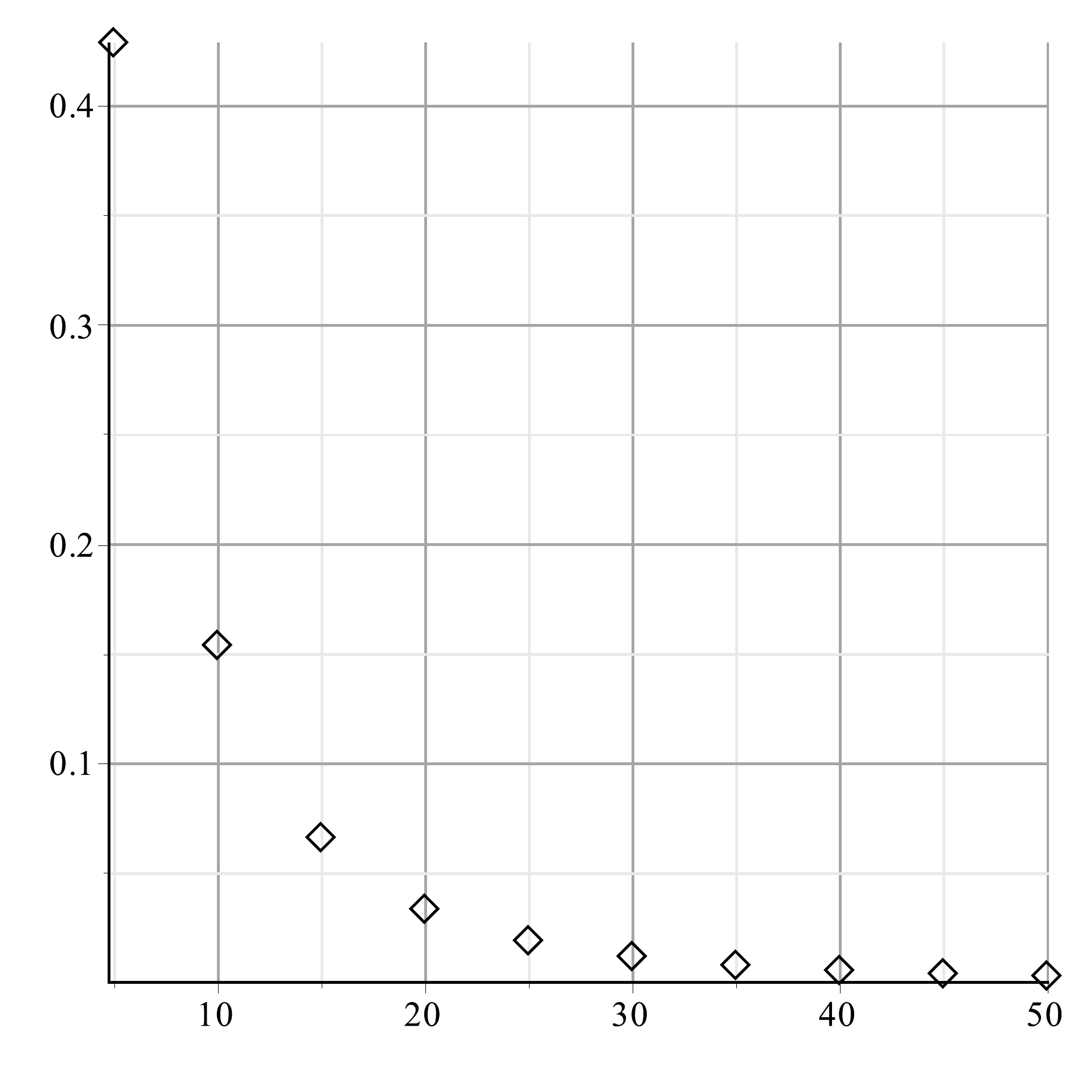}
\caption{ The absolute (top) and relative  errors of $F_1^{(1)}(3)$ with sizes of the discretized kernel on the horizontal axis. Since the kernels are smooth (in fact, analytic) the convergence is rather fast (see \cite{Bornemann} for more details). }
\label{FigFrederr}
\end{wrapfigure}
Numerical evaluation using the method in \cite{Bornemann} is shown in Figure \ref{FigFred}.
 In Figure \ref{MeijerBessel} we compare the Bessel field with $\alpha=0$ with the Meijer-G field with $a=0=b$. It is natural to compare these two since they both describe a scaling limit of a  random matrix  model with a hard edge at the origin.
 Considering their random matrix origin it is clear that the spectrum of the Meijer-G field is more attracted towards the hard edge  due to the effect of an attraction exerted by the eigenvalues of the other matrix. See also Section \ref{secMtoB} and Remark \ref{remdens}.
\paragraph{Probability that $\boldsymbol{ spect (M_1)\times spect(M_2) \subset \le[\frac {s}{n^2},\infty\ri)\times \le[\frac t{n^2},\infty\ri)}$.}
With the same notations as above for the eigenvalues of $M_1, M_2$, our results on the scaling limits of the kernel imply
\be
\lim_{n\to \infty} \mathbb P_n\le(x_1> \frac s{n^2}, y_1> \frac {t}{n^2}\ri) = \det\le[Id_{\mathcal H} - \mathcal R \ri]=:F_2(s,t), 
\label{F2}
\ee
where $\mathcal H= L^2((0,s)\sqcup (0,t)) \simeq L^2((0,s))  \oplus L^2((0,t))$ and $\mathcal R$ is the integral operator  on $\mathcal H$ defined in the introduction. 
Explicitly it reads as follows: denote by $\phi(\zeta) = \le[{\phi_+(\zeta)\atop \phi_-(\zeta)} \ri]$ a vector of $\mathcal H$, so that $\phi_+(\zeta) \in L^2((0,s))$ and $\phi_-(\zeta)\in L^2((0,t))$. Then 
\be
\mathcal R \phi (\zeta) = \le[
\begin{array}{c}
\ds \int_0^s \mathcal G_{01}(\zeta,\xi)\phi_+(\xi) \d \xi + \int_0^t \mathcal G_{00}(\zeta,\xi)\phi_-(\xi) \d \xi \\
\ds \int_0^s \mathcal G_{11}(\zeta,\xi)\phi_+(\xi) \d \xi + \int_0^t \mathcal G_{10}(\zeta,\xi)\phi_-(\xi) \d \xi 
\end{array}
\ri].  
\ee
It is difficult to gain a quantitative understanding  of the level of correlation between the two matrices; for this reason we have carried out a numerical computation showing, by the way of example, the quantity  $1 - \frac {F_1^{(1)}(s)F_1^{(2)}(t)}{F_2(s,t)}$, which, in view of their definition, would be identically zero if the spectra were independent (see Fig. \ref{FigFred2}).  
\begin{figure}[t]
\includegraphics[width=0.3\textwidth]{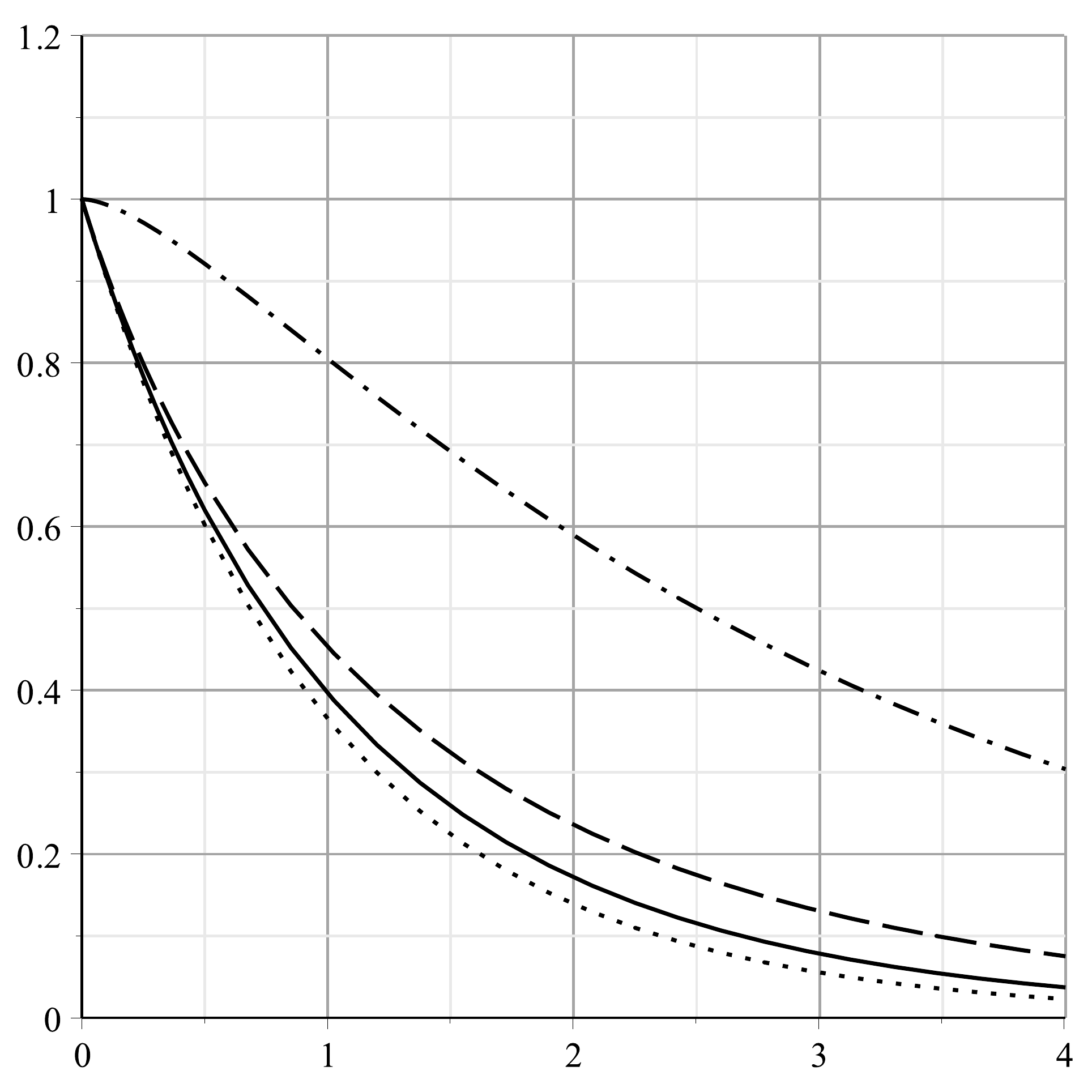}
\includegraphics[width=0.3\textwidth]{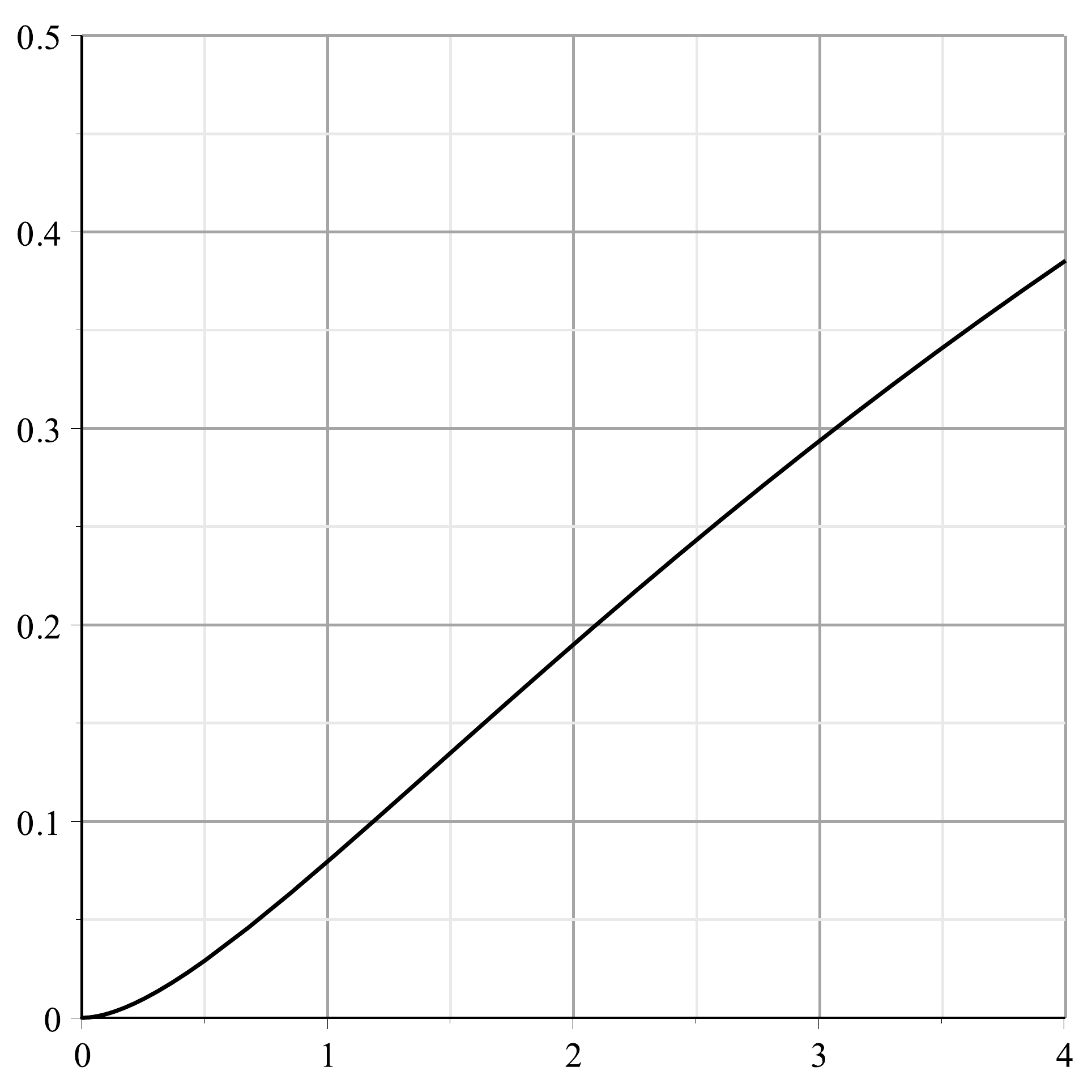}
\includegraphics[width=0.4\textwidth]{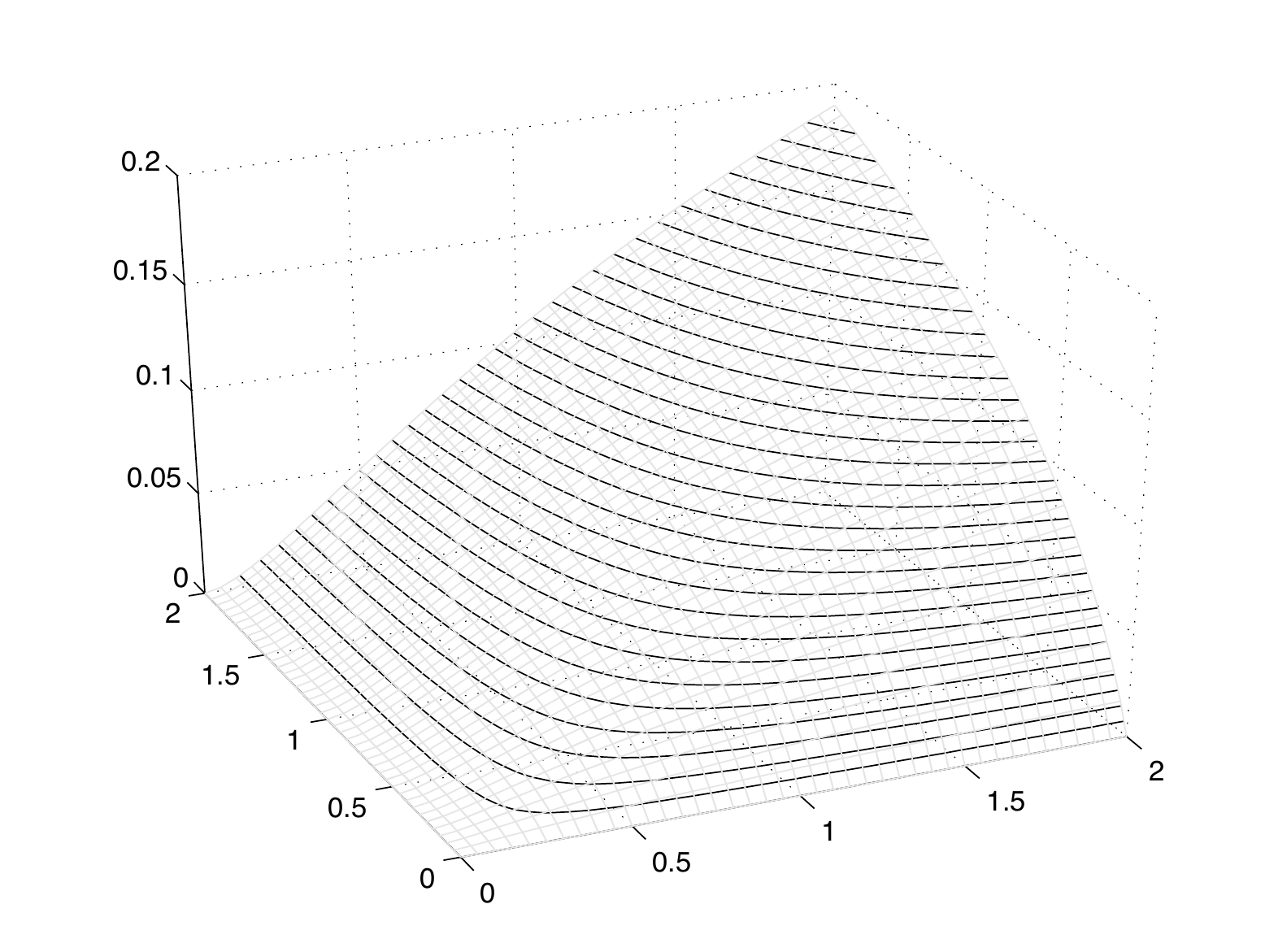}
\caption{The gap probabilities $F_2(s,s)$ (solid), $F_1^{(1)}(s)$ (dash), $F_1^{(2)}(s)$ (dash-dot) and $F_1^{(1)}(s) F_2^{(2)}(s)$ (dots) (\eqref{F2},\eqref{F1}) computed numerically as explained in \cite{Bornemann}, here for $a = 0, b=1$. In  the center the quantity  $1-\frac { F_1^{(1)}(s)F_1^{(2)}(s)}{F_2(s,s)}$ and on the right the plot of $1-\frac{F_1^{(1)}(s)F_1^{(2)}(t)}{F_2(s,t)}$ on the domain $[0,2]\times [0,2]$. This quantity is a measure of deviation from independence, and it would be identically zero if the spectra were independent.}
\label{FigFred2}
\end{figure}

\subsection{Convergence to the Bessel field}
\label{secMtoB}
Let us consider our Meijer-G DRPF defined by the kernels in Theorem \ref{MeijerGKprime}.  
\bt
\label{toB}
In the limit $b\to +\infty$  (and $a$ fixed) the $+$ field becomes the Bessel DRPF (with parameter $a$)  under the rescaling 
$\zeta\mapsto \frac b 4 \zeta$. Under the same rescaling the points of the  $-$ field (almost surely) do not occupy any bounded set.
\et
To prove the theorem it suffices to show that all the correlation functions involving the $-$ field tend to zero uniformly on compact sets, while 
the  correlation functions involving the $+$ field alone  become the correlation functions of the Bessel DRPF with the standard kernel \eqref{BesselKernel0}. Specifically 
\be
\lim_{b\to+\infty} \frac b 4 R^{(++)}\le(\frac b 4 \zeta  ,\frac b 4  \xi \ri)=
\lim_{b\to+\infty} \frac b 4 \mathcal G_{01}\le(\frac b 4 \zeta  ,\frac b 4  \xi \ri)= \le( \frac {\zeta}{\xi} \ri)^\frac a 2 K_{_B,a}(\zeta,\xi)
\label{gtob}
\ee

\begin{wrapfigure}{r}{0.3\textwidth}
\vspace{-20pt}\includegraphics[width=0.3\textwidth]{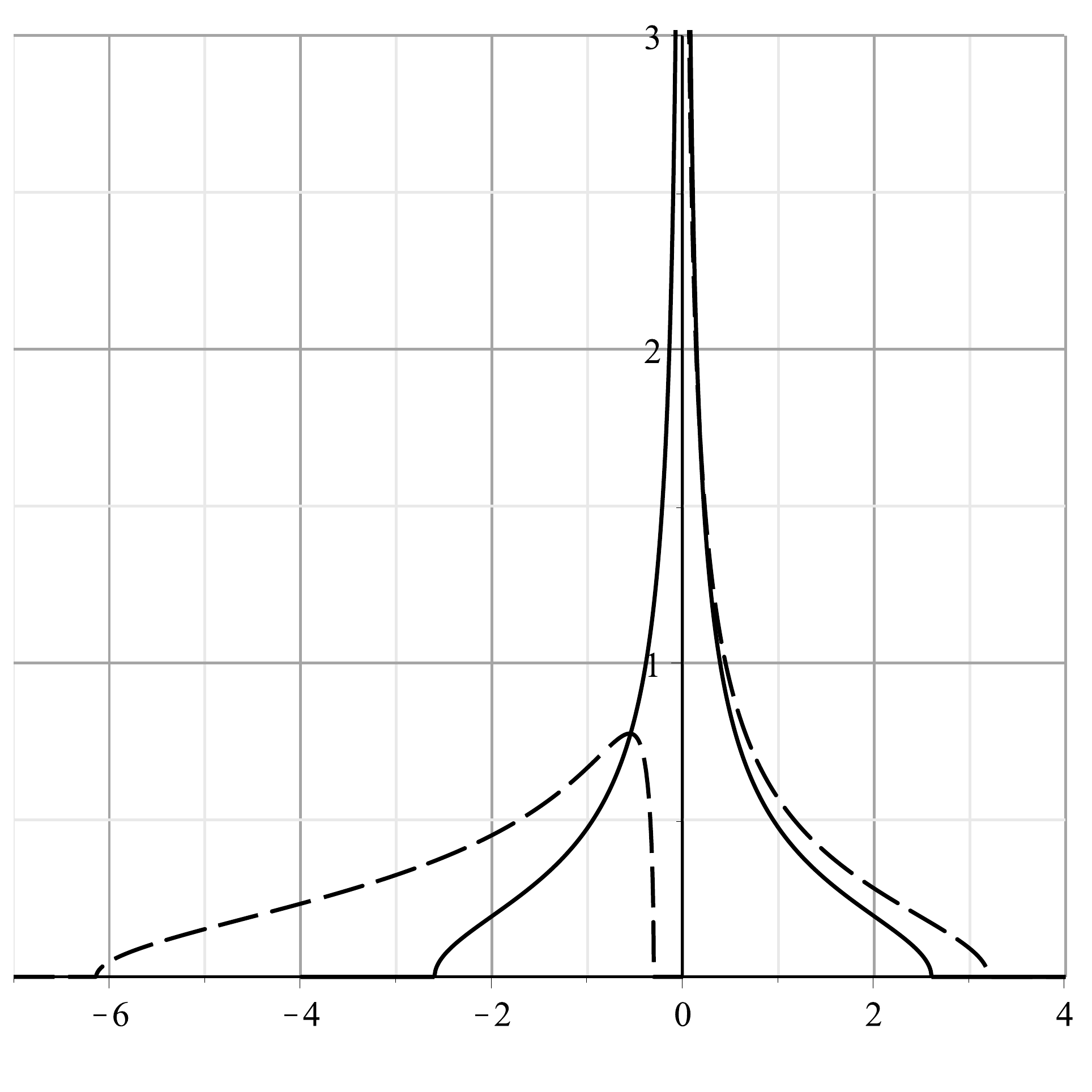}
\caption{The limiting density for the eigenvalues of $\frac 1 n M_1,\frac 1 n M_2$ (those of $M_2$ are plotted on the negative axis). Solid: $a, b$ do not scale with $n$. Dashed: $a$ does not scale, $b = \beta n$ (here $\beta = 2$).
 }
\label{FigDensComp}
\end{wrapfigure}
\noindent where the kernel $\mathcal G_{01}$ is defined in Theorem \ref{MeijerGKprime} (the dependence on $a, b$ of the kernel is not indicated explicitly but can be read off \eqref{g00g01}).  The correlation functions constructed from the determinants of the kernel in \eqref{gtob} are the same as the correlation functions of the Bessel kernel $K_{_B,a}$ because the prefactor $(\zeta/\xi)^{a/2}$ drops from all the determinants (it amounts to a conjugation of the matrix $R^{(++)}(\zeta_i,\zeta_j)$ by a diagonal matrix). The proof of \eqref{gtob} by a direct computation is included in Appendix \ref{MtoB}, where it is also shown that all the correlation functions involving the $-$ field tend to zero uniformly on compact sets of the (rescaled) variables (in particular the kernels $R^{(--)}$ and $R^{(+-)}$ tend to zero, and $R^{(-+)}$ has a limit,  which, however, does not affect the correlation functions).\par\vskip 4pt

 An immediate consequence of Theorem \ref{toB} is that the corresponding gap probabilities $F_1^{(1)} (bs/4)$ and $F_2(bs/4, bt/4)$ also both converge to the gap probability of the Bessel field on $[0,s]$.

A similar direct inspection of  the expressions in Theorem \ref{Hkernelthm} shows that 
 in the  scaling limit $b = n\beta$, $\beta >0$, $n\to \infty$ 
\be
\lim_{n\to\infty}   \frac \beta{4n(\beta+1)}   H_{01}^{(n)}\le(\frac {\zeta \beta}{4 n(1+\beta)},\frac {\xi \beta}{4 n(1+\beta)}\ri) = \le( \frac {\zeta}{\xi} \ri)^\frac a 2 K_{_B,a}(\zeta,\xi)\ , \ \ \ 
\label{219}
\ee
 while $H_{00}^{(n)}(x,y),H_{10}^{(n)}(x,y)$  tend to zero under the same rescaling\footnote{The kernel $H^{(n)}_{11}$ does {\em not} tend to zero, but it remains bounded and hence it becomes irrelevant in the correlation functions because it appears in the lower left block.}. 
This means that the eigenvalues of the first matrix near the origin ``decouple'' from those of the second matrix, and $M_1$ behaves exactly as a one-matrix Laguerre ensemble in the scaling limit near the hard-edge.
This limit \eqref{219} of the $H^{(n)}_{\mu\nu}$ kernels is equivalent to taking the limit from the Cauchy-Laguerre model to the Meijer-G process (with $x\mapsto x/n^2$) followed by  the limit \eqref{gtob}. 
Composition of the two scalings is equivalent, up to a normalization constant, to rescaling the point-field as  $x\mapsto x/n$.

To explain why the convergence to the Bessel field is intuitively clear, the reader should refer to \eqref{LagCMM}. If $b = \beta n$ scales with $n$, the probability of finding eigenvalues of $M_2$ near the origin is suppressed (see Remark \ref{remdens} and Figure \ref{FigDensComp}): its eigenvalues recede  from the origin and do not exert any longer attraction on the eigenvalues of $M_1$, which now behaves as in a one-matrix model with a hard edge and thus falls within the same universality class as the Laguerre ensemble. This intuition is based on the electrostatic interpretation of the probability density as explained in \cite{Bertola:CauchyMM}.

\br
\label{remdens}
It is explained in \cite{BertolaBalogh, Bertola:CauchyMM} that 
the  limiting (macroscopic) densities of eigenvalues\footnote{In our definition of the measure \eqref{LagCMM} the correct macroscopic scaling is to consider the eigenvalues of $\frac 1 n M_j$; had we defined the measure with ${\rm e}^{-n\tr(M_1+M_2)}$ then we would consider directly the eigenvalues of the $M_j$'s.}  of $\frac 1 n M_1,\frac 1 n  M_2$ can be computed  from the jumps $\frac {|\mu_+-\mu_-| }\pi$ on $z\in \R_+$ (for $\rho_1(z)$) and on $z\in \R_-$ (for $\rho_2(-z)$)  of the three branches of the algebraic curves below (using the first equation for $a$ and $b$ fixed, and the second for
$a$ fixed and $b=\beta n$) 
\bea
{\mu }^{3}-\frac \mu  3+{\frac {2}{27}}-\frac 1{{z}^{2}}=0 
,\qquad {\mu }^{3}-{\frac { \left( {z}^{2}+z\beta+{\beta}^{2} \right) 
\mu }{{3z}^{2}}}-
{\frac {3\,z ({\beta}^{2} +9\beta + 9) +2\,{\beta}^{3}-3\,{z}
^{2}\beta-2\,{z}^{3}}{27 {z}^{3}}}=0. 
\label{denscurves}
\eea
 One can verify that in the first case the behaviour of the densities near the origin is $z^{-\frac 23}$ while in the second case one of the densities behaves like $z^{-\frac 1 2}$ (and the other is zero), see Fig. \ref{FigDensComp}.
\er
\subsection{Outlook: computation of the gap probabilities and integrable PDEs}
Although the formulas \eqref{F1}, \eqref{F2} do compute the statistics of the lowest eigenvalues, they are transcendental and a connection with a nonlinear ordinary differential equation  (or partial DE for \eqref{F2}) is desirable. It should be pointed out, however, that the {\em numerical} computation of Fredholm determinants is not harder (in fact far simpler) than the numerical integration of nonlinear differential equations \cite{Bornemann}:  the graphs for $F_1^{(j)}(s)$ and $F_2(s,t)$ in Fig. \ref{FigFred} and \ref{FigFred2} are computed in few minutes on a low-end machine using the algorithm explained in \cite{Bornemann} and provide more than $4$  significant digits (see  Fig. \ref{FigFrederr})\footnote{A Maple worksheet to compute these determinants is available upon request.}. The main approach of Tracy and Widom \cite{TracyWidomLevel, Tracy-Widom-Bessel} is to derive Hamiltonian equations for the evaluations of the resolvent at the endpoints of the interval. 

A different  approach (which has been followed in \cite{BertolaCafasso1,BertolaCafasso2})  relies upon the theory of ``integrable kernels''  of Its-Izergin-Korepin-Slavnov (IIKS theory for short) \cite{ItsIzerginKorepinSlavnov} and it relates it to the solution of a Riemann--Hilbert problem.
These (matrix valued) kernels are of the general form \cite{ItsHarnad}
\be
K(x,y) = \frac {F^t(x)G(y)}{x-y} \ ,\ \  F,G\in Mat(p\times r)\label{intkernel}
\ee
with the property that they are nonsingular on the diagonal, namely, $F^t(x) G(x) \equiv 0$ for all $x$.
%

It is thus an important step to present the kernels in a form similar to \eqref{intkernel}. This is the purpose of the expressions in Proposition \ref{propConcomitant}.
The connection to a Riemann--Hilbert problem, once established, allows one  to derive nonlinear PDEs for the gap probabilities: this is the approach of \cite{BorodinDeift} and also \cite{BertolaCafasso1}. 
The asymptotic kernels are not of this form (except for the ``diagonal'' ones): this is not necessarily discouraging, since it was shown in \cite{BertolaCafasso2} that it is still possible to use to the IIKS theory even for kernels that are not immediately of  the form \eqref{intkernel}.

Preliminary results (in preparation with S. Y. Lee) show that the  gap $F_{2}(s,t)$ solves a particular case of  the third Painlev\'e\ transcendent in the variable $\sqrt{s+t}$, much in the same spirit as for the Tracy-Widom distribution.
It is our plan to address the connection  with Riemann--Hilbert problems and isomonodromic deformation equations in forthcoming publications.

\section{From Jacobi  to Cauchy-Laguerre biorthogonal polynomials; preliminaries}
\label{setup}
As was observed in \cite{Borodin:Biorthogonal}, the Cauchy biorthogonal polynomials in \eqref{CBOPdef}  are related to the classical Jacobi orthogonal polynomials for the weight $x^{a+b}\d x$ on $[0,1]$. We thank A. Borodin for pointing out this connection whose main point is as follows:
consider the bi-moment matrix

$$I_{ij}=
\iint_{0}^{+\infty} x^iy^j \frac{e^{-(x+y)}}{x+y}x^ay^b \, d\, x d\,y \ .$$
With the change of variables $ x=rs\ , y= r(1-s)$
 the integral becomes
 \bea
\int_0^{+\infty} r^{i+j+a+b} e^{-r}\d r\int_0^1 \frac{t^{i+a+1} (1-t)^{j+b+1}\d t}{t(1-t)}=
 \frac{\Gamma(a+i+1)\Gamma(b+j+1)}{(i+a+j+b+1)}
\ .
\eea
Notice now that the Hankel moment matrix $M$ for the Jacobi polynomials on $[0,1]$ with weight 
$x^{a+b}$ is given by 
\be
M_{ij}=\ds{\int_0^1 x^{i+j}x^{a+b} d\, x}= \frac{1}{i+j+a+b+1} \Rightarrow I_{ij}=\Gamma(a+i+1)M_{ij}\Gamma(b+j+1).  \label{MomJac}
\ee
This immediately implies  the following Proposition.
\bp
\label{propJCBOP}
Let  $P_m(x)=
\ds{\sum_{i=1}^mC_{m,i}x^i}$ denote the $m$-th Jacobi orthogonal polynomial normalized as in \eqref{Pncoeff}.\footnote{Here we define the orthogonality on $[0,1]$ rather than on $[-1,1]$.
The latter is more customary, but the correspondence between the two versions  is a simple affine transformation of the independent variable $x\to 2x-1$.} Then 
\be
\wt p_m(x):=\sum_{i=0}^m C_{m,i}\frac{x^i}{\Gamma(a+i+1)}, \qquad \wt q_n(y):=\sum_{j=1}^n
C_{n,j}\frac{y^j}{\Gamma(b+j+1)}
\label{JactoCBOPs}
\ee are the Cauchy biorthogonal 
polynomials (not orthonormal) associated with densities $
x^a {\rm e}^{-x}$ and $ y^b {\rm e}^{-y}$  and
\be
\int_{\R_+^2} \wt p_m(x) \wt q_\ell (y) \frac { x^a y^b {\rm e}^{-x-y}}{x+y} \d x \d y = 
\int_0^1 P_m(x) P_\ell (x) x^\alpha \d x = 
h_m \delta _{m\ell}
\ee
where $h_m=\frac 1{2m+\alpha + 1}$. 
  This result is valid as long as $a+1>0, b+1>0, a+b+1>0$.
\ep

\begin{remark}{\rm 
The partition function of the corresponding Cauchy two-matrix model can   be easily computed using Proposition \ref{propJCBOP} :
in \cite{Bertola:CauchyMM} it was shown that if  $c_n$ are the norms of the {\em monic} polynomials (denoted here provisionally by $p_n^{(monic)},q_n^{(monic)}$), namely, 
$
\int_{\R_+^2}  p_m^{(monic)}(x) q_\ell^{(monic)} (y) \frac { x^a y^b {\rm e}^{-x-y}}{x+y} \d x \d y = c_m \delta_{m\ell}
$, 
 then $\mathcal Z_n = (n!)^2 \prod_{j=0}^{n-1} c_j$.  Inspection of the leading coefficients of $\wt p_n, \wt q_n$ in Theorem \ref{wtpnthm} shows that
\bea
c_n&\&=  
\le[\frac {n! \Gamma(\alpha + n + 1)}{\Gamma(n+\frac {\alpha + 1}2)\Gamma(n+1+ \frac {\alpha}2)
  }  \ri]^2 \frac {\Gamma(a + n +1 )\Gamma(b + n + 1)\pi }{2^{2n+\alpha }(n + \frac{ \alpha + 1}2)}.   
\eea
Hence
\be
\mathcal Z_n = \frac {\Gamma\le(\frac {\alpha + 1}2\ri)}{\Gamma\le(n  + \frac {\alpha+1}2\ri)} 
 \frac {}{} \frac {G(n+1)^2G(\alpha+n+1)^2G(\frac{\alpha+1}2)^2 G(\frac \alpha 2 +1)^2}{G(\alpha+1)^2G(\frac {\alpha+1}2 + n )^2 G(n+1+\frac \alpha 2)^2}\frac{ G(a+n+1)G(b+n+1) \pi}{G(a+1)G(b+1) 2^{n(n-1) + n\alpha }}\ .
\ee
Here (and only in the above formula) $G(z)$ denotes Barnes' G-function, satisfying the relation $G(z+1) = \Gamma(z)G(z)\ ,\ \ G(1)=1$.
}
\end{remark}
\subsection{The kernels of the correlation functions}
The statistics of eigenvalues of $M_1,M_2$ is expressible in terms of four kernels  that can be expressed in terms of the CBOPs and auxiliary functions.  
In keeping with the notation of \cite{Bertola:CBOPs} 
 we introduce the auxiliary functions 
\be
p_n^{(1)}(z):= \int_{\R_+}\!\!\!\! \frac {  p_n(x) x^{a} {\rm e}^{-x} \d x}{
{z-x}
} \ ,\qquad 
q_n^{(1)}(w):= \int_{\R_+}\!\!\!\! \frac {  q_n(y) y^{b} {\rm e}^{-y} \d y}{
{w-y}
}.   \label{auxpn}
\ee
The four kernels to be computed are 
\bea
K^{(n)}_{00} (x,y) &\& := \sum_{j=0}^{n-1} p_j(x) q_j(y), \label{defK00}\\
K^{(n)}_{01} (x,y) &\& := 
{-}
\sum_{j=0}^{n-1} p_j(x) q^{(1)}_j(-y) = \int_{\R_+}\!\!\!\! \frac{ y'^b {\rm e}^{-y'} \d y'}{y+y'} K^{(n)}_{00}(x,y'), \label{defK01}\\
K^{(n)}_{10} (x,y) &\& :=
{-}
\sum_{j=0}^{n-1} p^{(1)}_j(-x) q_j(y) = \int_{\R_+}\!\!\!\!\frac{ x'^a {\rm e}^{-x'} \d x'}{x+x'} K^{(n)}_{00}(x',y), \label{defK10}\\
K^{(n)}_{11} (x,y)&\&  := \sum_{j=0}^{n-1} p^{(1)}_j(-x) q^{(1)}_j(-y) - \frac 1{x+y}=\\
&\& = \int_{\R_+^2}\!\!\!\!  K^{(n)}_{00}(x',y') \frac{ x'^a {\rm e}^{-x'} \d x'}{x+x'}\frac{ y'^b {\rm e}^{-y'} \d y'}{y+y'} - \frac 1{x+y}.  
\label{defK11}
\eea
Each of these kernels can be recovered in general  by solving a Riemann--Hilbert problem \cite{Bertola:CBOPs, Bertola:CauchyMM} but in the present setting this will be a direct computation.

\subsection{Facts on Jacobi Polynomials}
Since Jacobi polynomials will be instrumental to our computations, we recall their basic properties.
Jacobi polynomials are defined by the formul\ae
\begin{equation*}
P_n^{(\alpha,\beta)} (\xi) = 
\frac{\Gamma (\alpha+n+1)}{n!\,\Gamma (\alpha+\beta+n+1)}
\sum_{m=0}^n {n\choose m}
\frac{\Gamma (\alpha + \beta + n + m + 1)}{\Gamma (\alpha + m + 1)} \left(\frac{\xi-1}{2}\right)^m, 
\end{equation*}
\begin{equation*}
\int_{-1}^1 (1-\xi)^{\alpha} (1+\xi)^{\beta} 
P_m^{(\alpha,\beta)} (\xi)P_n^{(\alpha,\beta)} (\xi) \; d\xi =
\frac{2^{\alpha+\beta+1}}{2n+\alpha+\beta+1}
\frac{\Gamma(n+\alpha+1)\Gamma(n+\beta+1)}{\Gamma(n+\alpha+\beta+1)n!} \delta_{nm}.  
\end{equation*}

We shall use the variable $
z = \frac {1-\xi}2$ so that the orthogonality becomes
\be  \int_{0}^1 z^{\alpha} (1-z)^{\beta} 
P_m^{(\alpha,\beta)} \le(1-2z\ri)P_n^{(\alpha,\beta)} (1-2z) \; dz =
\frac{1}{2n+\alpha+\beta+1}
\frac{\Gamma(n+\alpha+1)\Gamma(n+\beta+1)}{\Gamma(n+\alpha+\beta+1)n!} \delta_{nm}.
\ee
The case of interest to us  is $\beta = 0,\ \alpha  = a+b$ and thus we shall introduce a simplified notation 
\begin{align}  
P_n(z)&:= P_n^{(\alpha,0)} (1-2z) = \sum_{m=0}^n 
\frac{ \Gamma (\alpha + n + m + 1)}{m! (n-m)!\Gamma (\alpha + m + 1)} \left(-z\right)^m
\label{Pncoeff}\\
&= \int_\gamma \frac {\Gamma(\alpha + n + 1-u) \Gamma(u)}{\Gamma(n+1+u) \Gamma(\alpha -u +1)} z^{-u} \frac {\d u }{2i\pi} = \frac {\Gamma(n + \alpha +1)}{n! \Gamma(\alpha +1)} {_2F_1}\le({-n,n+\alpha + 1\atop \alpha + 1};z\ri), 
\label{Pnhyper}\\
 \int_{0}^1&P_m \le(z\ri)P_n (z)  z^{\alpha} \; dz =
\frac{1}{2n+\alpha+1} \delta_{nm}\label{Pnhn}, 
\end{align} 
where the integral representation is valid for $0<|z|<1$.
In \eqref{Pnhyper} the contour encloses $u\in \R_-$ (Fig. \ref{MeijerContour}); note that the poles of $\Gamma(u)$ for $u=-n-1, -n-2,\dots$ are cancelled by the poles of the term $\Gamma(n+1+u)$ in the denominator and thus the result is a polynomial as a consequence of a simple residue computation.

\begin{wrapfigure}{r}{0.33\textwidth}
\resizebox{0.3\textwidth}{!}{\input {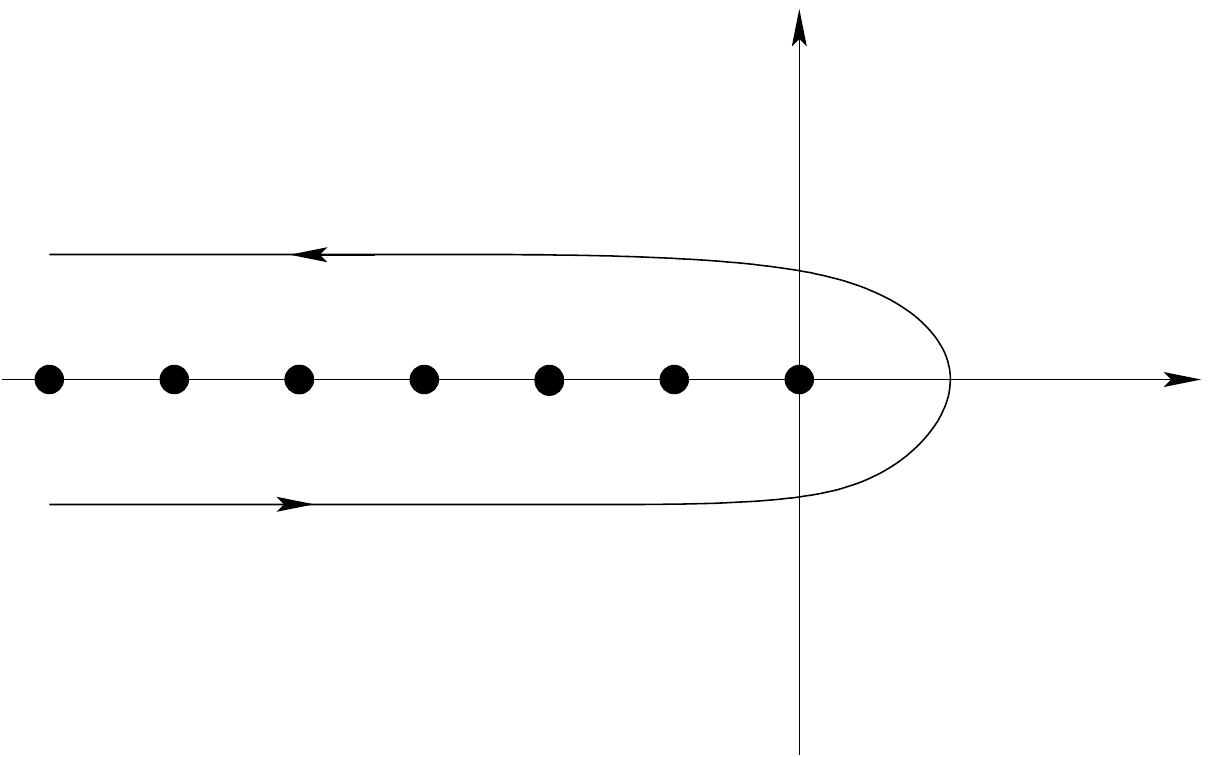_t}}
\caption{The typical contour for Meijer-type integrals.}
\label{MeijerContour}
\end{wrapfigure}

The Christoffel-Darboux kernel for Jacobi polynomials 
\cite{Szego}
 is defined as 
\be
J_n(z,w):= \sum_{m=0}^{n-1} \frac {P_m(z) P_m(w)}{h_m},  \label{Jndef}
\ee
 where $h_n$ are the norms squared of $P_n$  in \eqref{Pnhn}. The following proposition is simple but we could not find it  in the literature, and  since its proof is very close to proofs later on we present it here.
\bp
\label{kernelJacobin}
For $\beta =0$, the Christoffel-Darboux kernel for Jacobi polynomials can be expressed as 
\bea
K_n(z,w) &\& :=\sum_{j=0}^{n-1}(2j + \alpha + 1) P_j(z)P_j(w)=\cr 
&\&= \int_{\gamma^2} \frac {\d u}{2\pi i} \frac {\d v}{2\pi i} \frac{z^{-u} w^{-v}}{1+ \alpha - v-u}  
 \frac {\Gamma(u)\Gamma(v) \Gamma(n + \alpha  + 1 - u)\Gamma( n + \alpha + 1 - v)}{\Gamma(n+u)\Gamma(n+v)\Gamma(\alpha+1 - u )\Gamma(\alpha+1 - v)}, 
 \label{211}
\eea
 valid for $0<|z|, |w|<1.$
Alternatively, introducing the function $\wh G_{n}(z)$ {\rm(a polynomial of degree $n-1$)}
\be
\wh G_n(z):= \int_\gamma \frac {\d u }{2\pi i} \frac {\Gamma(u)\Gamma(n+\alpha +1 -u)}{\Gamma(n+u)\Gamma(\alpha +1 -u)} z^{-u} 
\ee
we can write 
\be
K_n(z,w) = \int_0^1 \wh G_{n}(tz)\wh G_{n}(tw) t^\alpha \d t.  \label{213}
\ee
\ep
At this point we recall the definition of the Meijer $G$ functions, or rather a class of $G$-functions 
pertinent to this paper.  
\begin{definition} [Meijer $G$-functions]
Suppose two pairs of natural numbers $p\leq q$ and $0\leq m\leq q, \, 0\leq n\leq p$ are given.  
Then the Meijer $G$-function is defined by its Mellin-Barnes integral as follows: 
\be
G_{p,q}^{\,m,n} \!\left( \le.{ a_1, \dots, a_p \atop  b_1, \dots, b_q } \; \right| \, z \right) = \int_\gamma \frac {\d u }{2\pi i}\, \frac{\prod_{j=1}^m \Gamma(b_j + u) \prod_{j=1}^n \Gamma(1 - a_j -u)} {\prod_{j=m+1}^q \Gamma(1 - b_j -u ) \prod_{j=n+1}^p \Gamma(a_j +u )} \,z^{-u} ,
\ee
The contour $\gamma$, depicted in Fig. \ref{MeijerContour}, is chosen to encircle all the poles of functions  $\Gamma(b_j + u)$ and none of the poles of functions $\Gamma( 1 - a_j - u)$ (the implicit assumption is that none of the poles of the former coincides with any of the poles of the latter). 
\end{definition}  Further properties of $G$-functions are discussed in the Appendix \ref{MGapp}.  We can identify all the functions discussed so far in terms of $G$-functions. We have 
\begin{proposition} \label{prop:pnghatn}
The Jacobi polynomials $P_n$ and the function $\wh G_n$ appearing in the 
representation of the Christoffel-Darboux kernel are $G$-functions with symbols
\begin{equation*}
P_n(z)=G^{1,1}_{2,2} \left(\left.{-\alpha -n,n+1\atop 0,-\alpha}\; \right | z\right),\quad 
\wh G_n(z)=G^{1,1}_{2,2} \left(\left.{-\alpha -n,n\atop 0,-\alpha}\; \right | z\right).  
\end{equation*}
\end{proposition}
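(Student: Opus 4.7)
The plan is to proceed by a direct verification: substitute the claimed parameters into the Mellin–Barnes definition of the Meijer G-function and match the resulting integrand against the integral representations already on the page.

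For the Jacobi polynomial, I would take $p=q=2$, $m=n=1$, with top row $(a_1,a_2)=(-\alpha-n,\,n+1)$ and bottom row $(b_1,b_2)=(0,\,-\alpha)$. The numerator of the G-function integrand is then $\Gamma(b_1+u)\Gamma(1-a_1-u)=\Gamma(u)\,\Gamma(\alpha+n+1-u)$, while the denominator is $\Gamma(1-b_2-u)\Gamma(a_2+u)=\Gamma(\alpha+1-u)\,\Gamma(n+1+u)$. This reproduces exactly the integrand of the Mellin–Barnes representation of $P_n(z)$ displayed in \eqref{Pnhyper}. The contour $\gamma$ in the Meijer definition is, by construction, the same contour used in \eqref{Pnhyper}: it encircles the poles of $\Gamma(b_1+u)=\Gamma(u)$ (at $u=0,-1,-2,\dots$) and leaves the poles of $\Gamma(1-a_1-u)=\Gamma(\alpha+n+1-u)$ on the other side, which is precisely the convention indicated in Fig.~\ref{MeijerContour}.

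For $\wh G_n$ the computation is identical, but now with $a_2=n$ in place of $a_2=n+1$, so the lower-right Gamma in the denominator becomes $\Gamma(n+u)$ instead of $\Gamma(n+1+u)$. The numerator is unchanged, and the result matches the defining integral of $\wh G_n$ verbatim. In particular, the hypothesis in the Meijer definition that no pole of $\Gamma(b_j+u)$ coincides with a pole of $\Gamma(1-a_j-u)$ is satisfied generically in $\alpha$, with the coincident cases handled by the standard residue/limit argument (the same argument already used after \eqref{Pnhyper} to see that $P_n$ is a polynomial, since poles of $\Gamma(u)$ at $u=-n-1,-n-2,\dots$ are cancelled by $\Gamma(n+1+u)$ in the denominator).

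There is no serious obstacle: the proposition is simply a dictionary translation between the contour integrals already written and the Meijer G-symbol notation. The only care needed is to check the bookkeeping of indices $(m,n,p,q)$ and the contour convention; once those are set as above, matching the four Gamma factors in numerator and denominator is mechanical.
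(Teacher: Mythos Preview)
Your proposal is correct and is exactly the approach the paper takes: the proposition is stated without proof, as it is a direct identification of the Mellin--Barnes integrands in \eqref{Pnhyper} and in the definition of $\wh G_n$ with the Meijer-G template, requiring only the bookkeeping of parameters that you carry out.
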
 We shall need the following Lemma, whose elementary proof is omitted.  
\bl \label{nicesum}
Let $\Omega^{j}_{k}=\frac{\Gamma( \alpha + j + 1 - u)\Gamma( \alpha + j + 1 - v)}{\Gamma(  k + u)\Gamma( k  + v)}$.  Then 
\begin{equation*}
\sum_{j=0}^{n-1}(2j + \alpha +1) \Omega^j_{j+1} =\frac{\Omega^n_n-\Omega_0^0}{1+\alpha-u-v}=
\int_0^1 t^{\alpha-u-v} (\Omega^n_n-\Omega_0^0)\d t. \end{equation*}
\el

\begin{proof} {[\bf Proof of Prop. \ref{kernelJacobin}]}
From \eqref{Jndef} and the value of the norms squared given by \eqref{Pnhn} we have
\bea
&\& \sum _{j=0}^{n-1} (2j + \alpha + 1) P_j(z) P_j (w) = \notag\\
&\& \int_{\gamma\times\gamma} \frac {\d u}{2\pi i} \frac {\d v}{2\pi i }
\frac { z^{-u} w^{-v} \Gamma( u )\Gamma(v)  } {\Gamma(\alpha  - u +1)\Gamma(\alpha  - v +1)} 
\sum_{j=0}^{n-1}\frac {(2j + \alpha +1) \Gamma( \alpha + j + 1 - u)\Gamma( \alpha + j + 1 - v)}{\Gamma(  j + 1  + u)\Gamma( j  + 1  + v)}.  \label{212}
\eea
Applying now Lemma \ref{nicesum} to \eqref{212} we obtain
\begin{align*} 
& \int_{\gamma\times\gamma} \frac {\d u}{2\pi i} \frac {\d v}{2\pi i} \frac{z^{-u} w^{-v}}{1+\alpha -u -v }  
\le[ \frac {\Gamma(u)\Gamma(v) \Gamma(n + \alpha  + 1 - u)\Gamma( n + \alpha + 1 - v)}{\Gamma(n+u)\Gamma(n+v)\Gamma(\alpha - u + 1)\Gamma(\alpha - v + 1)}
-1\ri]=\\&\int_{\gamma\times\gamma} \frac {\d u}{2\pi i} \frac {\d v}{2\pi i}\int_0^1  t^{\alpha}  (tz)^{-u} (tw)^{-v}
\le[ \frac {\Gamma(u)\Gamma(v) \Gamma(n + \alpha  + 1 - u)\Gamma( n + \alpha + 1 - v)}{\Gamma(n+u)\Gamma(n+v)\Gamma(\alpha - u + 1)\Gamma(\alpha - v + 1)}
-1 \ri]\! \d t .  
\end{align*}
The term with the $1$ vanishes identically  because the contours of integration can be retracted to $-\infty$ and $z, w\in (0,1)$, 
 and the denominator does not vanish ($\alpha >-1$) because the contour around $0$ can be moved sufficiently to the left so that the denominator has no zero anywhere inside the contour. 
This immediately yields the first formula \eqref{211}. To obtain \eqref{213} we switch the 
order of integration in the second integral formula above.  
\end{proof}

Using the Stirling approximation formula one obtains readily the following Lemma.
\bl
\label{lemmagamma}
The following asymptotic formula holds uniformly in each sector  $|\arg(z)| < \pi-\epsilon$ ;
\bea
\frac {\Gamma ( z + \delta)}{\Gamma(z + \rho)} = z^{\delta - \rho} 
\le(
1 + \frac {(\delta + \rho -1) (\delta - \rho)}{2z} + \mathcal O(z^{-2}) 
\ri) =z^{\delta - \rho}
\le(
\frac {z+1}z\ri)^{\frac {(\delta + \rho -1) (\delta - \rho)} 2}\hspace{-40pt} \le(1 + \mathcal O(z^{-2}) 
\ri) ,\ \ |z|\to\infty.
\eea
\el
Then, using Lemma \ref{lemmagamma} and Proposition \ref{kernelJacobin}
\bea
&\& K_n\le(\frac x{n^2} \le(\frac {n}{n+1}\ri)^\alpha, \frac {y} {n^2} \le(\frac {n}{n+1}\ri)^\alpha \ri) =\notag \\
&\& =n^{2\alpha +2} \le(\frac {n+1}n\ri) ^ {\alpha(\alpha+1)} \int_{\gamma^2} \frac {\d u}{2\pi i} \frac {\d v}{2\pi i} \frac{x^{-u} y^{-v}}{1+ \alpha - v-u}  
 \frac {\Gamma(u)\Gamma(v)}{\Gamma(\alpha - u + 1)\Gamma(\alpha - v + 1)} \le(1+ \mathcal  O(n^{-2})\ri)=\cr
 &\& =n^{2\alpha +2}\le(\frac {n+1}n\ri) ^ {\alpha(\alpha+1)} \le(1  + \mathcal  O(n^{-2})\ri) \int_0^1 t^\alpha  B_\alpha( t x) B_\alpha(ty) \d t,  \label{KernelJac}
 \eea
 where we have introduced the function $B_\alpha$  defined as 
 \bea
&\&  B_\alpha(z):=  \int_{\gamma} \frac {\d u}{2\pi i} z^{-u} 
 \frac {\Gamma(u)}{\Gamma(\alpha - u + 1)} = z^{-\alpha/2} J_\alpha(2\sqrt{z}).   
\eea
In the above $J_\alpha$ denotes the Bessel function of the first kind.  We also note that the integral makes sense for any value of $z$ thanks to the super-exponential behavior of the gamma functions.  Finally, $B_{\alpha}$ itself is a Meijer G-function, namely 
$G^{1,0}_{0,2} \left(\left.{~\atop 0,-\alpha}\; \right | z\right)$.  
The leading term of the kernel in \eqref{KernelJac} is almost the same as the Bessel kernel. Indeed, using  eq. (2.2)  in \cite{Tracy-Widom-Bessel} we can write the Bessel kernel, denoted here by $K_{_B,\alpha} $, as
\bea
K_{_B,\alpha }(x,y) &\& := \frac {J_\alpha (\sqrt{x})\sqrt{y} J'_\alpha(\sqrt{y}) -J_\alpha (\sqrt{y})\sqrt{x} J'_\alpha(\sqrt{x}) }{2(x-y)}=  \frac 1 4 \int_0^1 J_\alpha(\sqrt{xt}) J_{\alpha} (\sqrt{yt}) \d t.  
\label{BesselKernel0}
\eea 
A direct inspection shows that 
\bea
K_{_B,\alpha }(4x,4y) =  \frac 1 4 \int_0^1 J_\alpha(2\sqrt{xt}) J_{\alpha} (2\sqrt{yt}) \d t =  (xy)^{\frac \alpha 2}\frac 1 4 \int_0^1t^\alpha B_\alpha(tx) B_\alpha(ty)  \d t. 
\label{BesselKernel}  
\eea
Thus 
$$K_n\le(\frac x{n^2} \le(\frac {n}{n+1}\ri)^\alpha, \frac {y} {n^2} \le(\frac {n}{n+1}\ri)^\alpha \ri)     =n^{2\alpha +2}\le(\frac {n+1}n\ri) ^ {\alpha(\alpha+1)} \le(1 + \mathcal  O(n^{-2})\ri)\frac{4K_{B}(4x,4y)}{(xy)^{\frac{\alpha}{2}}}$$
This is not surprising since the Bessel kernel arises precisely as the scaling limit of the Jacobi (and Laguerre) ensemble \cite{Tracy-Widom-Bessel}.
 \br
The various factors $\frac {n+1}n$ in the limit of large $n$ are largely irrelevant, since they contribute to order $\mathcal O(n^{-1})$. Their introduction above yields a $\mathcal O(n^{-2})$ error term, as Lemma \ref{lemmagamma} shows.  
\er

\section{Cauchy-Laguerre Biorthogonal polynomials and their kernels}
\label{CLBOPS}
The reason for the use of  Laguerre's  name is clearly justified by the shape of the weights; nevertheless, as we are going to see, the resulting polynomials are more closely related to Jacobi polynomials. 
 Using the explicit form of Jacobi polynomials as given by \eqref{Pncoeff}, their relation to Cauchy BOPs (see \eqref{JactoCBOPs}) and the well known fact $\res{u=-k} \Gamma(u)= (-1)^k \frac 1{k!}$ we can immediately assert the following
\bt
\label{wtpnthm}
The  polynomials $\wt p_n$ of Proposition \ref{propJCBOP} are given by 
\bea
\wt p_n(z) &\& = \sum_{j=0}^n \frac {\Gamma(\alpha + n + j + 1)(-z)^j}{j! (n-j)! \Gamma(\alpha + j + 1)\Gamma(a + j + 1)} = 
\frac {\Gamma(\alpha + n + 1)}{n! \Gamma(\alpha + 1)\Gamma(a + 1)}
{_2F_2}\le({-n,\alpha + n + 1\atop  a+1, \alpha + 1}; z\ri) =
\nonumber \\
&\& =\int_\gamma \frac {\d u }{2i\pi} \frac {\Gamma( \alpha + n - u + 1)\Gamma(u)}{\Gamma( n +1 + u)\Gamma(\alpha  +1 - u)\Gamma (a + 1 - u)} z^{-u}
\label{wtpnMeij}
\eea
where the contour $\gamma$ is the same as in Fig. \ref{MeijerContour} with the poles of $\Gamma(\alpha + n +1 -u)$ in its exterior\footnote{The notation for generalized hypergeometric functions follows that of \href{http://dlmf.nist.gov/16.2}{16.2 in DLMF}.}.  
The integral representation is valid for any $z\neq 0$.
The expressions for the polynomials $\wt q_n$ follow from those for $\wt p_n$ by exchanging $a\leftrightarrow b$.
\et

We have two simple corollaries
\bc[Averages of $\wt p_n, \wt q_n$]\label{cor:avtpnqn}
The average of $\wt p_n$ with respect to the measure $x^a {\rm e}^{-x}\d x$, \rm($\wt q_n$ with 
respect to $y^b {\rm e}^{-y}\d y$\rm) is 
\be
\int_0^\infty x^a {\rm e}^{-x} \wt p_n(x) \d x = (-1)^n = \int_0^\infty y^a {\rm e}^{-y} \wt q_n(y) \d y. 
\ee
\ec
{\bf Proof.}
A simple computation gives: 
\begin{equation*} 
\int_0^\infty x^a {\rm e}^{-x} \wt p_n(x) \d x  = \sum_{j=0}^{n} \frac {\Gamma(\alpha + n + j + 1)(-1)^j \int_0^\infty x^{j+a} {\rm e}^{-x} \d x}{j!(n-j)!\Gamma(\alpha + j + 1) \Gamma( a + j + 1)} = \sum_{j=0}^{n} \frac {\Gamma(\alpha + n + j + 1)(-1)^j}{j!(n-j)!\Gamma(\alpha + j + 1) } =P_n(1). 
\end{equation*} 
It is well known that $P_n(1) = (-1)^n$ as a consequence of
\eqref{Pncoeff}.
\QED
The second corollary is an elementary identification of $\wt p_n$, $\wt q_n$ with one of the Meijer $G$-functions.    
\bc The polynomials $\wt p_n$ and $\wt q_n$ can be identified with the following 
Meijer $G$-functions: 
$$
\wt p_n(z)=G^{1,1}_{2,3} \left(\left.{-\alpha -n,n+1\atop 0,-\alpha, -a}\; \right | z\right), \qquad 
\wt q_n(z)=G^{1,1}_{2,3} \left(\left.{-\alpha -n,n+1\atop 0,-\alpha, -b}\; \right | z\right). 
$$
\ec
Direct inspection of the leading coefficient of $\wt p_n, \wt q_n$ together with 
Corollary \ref{cor:avtpnqn} imply the next result
\bt
\label{thmpnqn}
Let $\pi_n := \sqrt{2n + \alpha +1}\sqrt{\frac {\Gamma(a + n +1)}{\Gamma(b + n + 1)}}
, \, \quad \eta_n:= \sqrt{2n + \alpha +1}\sqrt{\frac {\Gamma(b + n + 1)}{\Gamma(a + n +1)}}$.  Then the {\bf bi-orthonormal polynomials} $p_n,q_n$, normalized to have identical positive leading coefficients, are given by
$$
 p_n(z) 
=(-1)^n\pi _n  \wt p_n, \quad 
q_n(z)  =(-1)^n \eta_n \wt q_n. $$
Moreover, $\pi_n = \int_0^\infty x^a {\rm e}^{-x} p_n(x) \d x  $ and 
$\eta_n= \int_0^\infty y^b {\rm e}^{-y} q_n(y)\d y.$ 
\et
\subsection{The auxiliary functions $p_n^{(1)}$ and $p_n^{(2)}$}
Since the definition of $q_n^{(1)}$can be obtained from that of $p_n^{(1)}$ by swapping $a$ with $b$ we subsequently will focus only on $p_n^{(1)}$ as defined in \eqref{auxpn}.\newline
Our goal is to express $p_n^{(1)}(z)$ in terms of the $G$-functions.  To this end we first 
compute $\int_0^\infty \frac{x^a {\rm e}^{-x} \wt p_n(x)}{z+x} \d x$ for $z>0$.  
\begin{lemma} \label{lem:biglemma}
Let $z>0, a>-1, b>-1, a+b=\alpha>-1$.  Then 
\begin{equation*}
\int_0^\infty \frac{x^a {\rm e}^{-x}\wt p_n(x)}{z+x} \d x={\rm e}^z G^{2,1}_{2,3} \left(\left.{-n-b,a +n+1\atop 0,a, -b}\; \right | z\right).  
\end{equation*}
\end{lemma}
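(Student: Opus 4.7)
I would reduce the integral to a single Mellin--Barnes representation and identify it with the Meijer $G$-function on the right-hand side, in three main steps.

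\medskip
\noindent\textbf{Step 1: Laplace reformulation via Jacobi polynomials.} Using $\frac{1}{z+x} = \int_0^\infty {\rm e}^{-t(z+x)}\,\d t$ one writes
\[
F(z):=\int_0^\infty \frac{x^a\,{\rm e}^{-x}\,\wt p_n(x)}{z+x}\,\d x = \int_0^\infty {\rm e}^{-tz}\,g(t)\,\d t,\quad g(t):=\int_0^\infty x^a\,{\rm e}^{-(1+t)x}\,\wt p_n(x)\,\d x.
\]
Expanding $\wt p_n$ via Theorem \ref{wtpnthm} and integrating term by term in $x$, the $\Gamma(a+j+1)$'s cancel and the remaining coefficients are exactly those of $P_n$ in \eqref{Pncoeff}, giving the closed form $g(t)=(1+t)^{-a-1}P_n\!\left(1/(1+t)\right)$. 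The substitution $u=1+t$ then yields $F(z)={\rm e}^z\int_1^\infty u^{-a-1}{\rm e}^{-uz}P_n(1/u)\,\d u$.

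\medskip
\noindent\textbf{Step 2: Double Mellin--Barnes representation.} Inserting the Mellin--Barnes integral for $P_n$ from \eqref{Pnhyper} (valid since $0<1/u<1$), swapping the order of integration, and evaluating the $u$-integral via $\int_1^\infty u^{\tau-a-1}{\rm e}^{-uz}\,\d u = z^{a-\tau}\Gamma(\tau-a,z)$ produces
\[
\frac{F(z)}{{\rm e}^z}=\int_\gamma \frac{\d\tau}{2\pi i}\,\frac{\Gamma(\alpha+n+1-\tau)\,\Gamma(\tau)}{\Gamma(n+1+\tau)\,\Gamma(\alpha+1-\tau)}\,z^{a-\tau}\,\Gamma(\tau-a,z).
\]
Then I substitute the Mellin--Barnes formula $\Gamma(\nu,z)=\int_{\gamma'}\frac{\d\sigma}{2\pi i}\frac{\Gamma(\nu+\sigma)}{\sigma}z^{-\sigma}$ and change double-integration variables from $(\tau,\sigma)$ to $(\tau,w)$ with $w=\tau+\sigma-a$, so that all $z$-dependence collapses to $z^{-w}$ and the inner integral reads $\int_\gamma \frac{\d\tau}{2\pi i}\,R(\tau)/(w+a-\tau)$ with $R(\tau):=\Gamma(\alpha+n+1-\tau)\Gamma(\tau)/[\Gamma(n+1+\tau)\Gamma(\alpha+1-\tau)]$.

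\medskip
\noindent\textbf{Step 3: Inner residue.} Telescoping of $\Gamma$-ratios gives $R(\tau)=\prod_{k=1}^n(\alpha+k-\tau)/\prod_{k=0}^n(\tau+k)$, a rational function; multiplied by $1/(w+a-\tau)$ the integrand decays like $|\tau|^{-2}$. Its poles interior to $\gamma$ are $\tau=0,-1,\dots,-n$ and its unique exterior pole is $\tau=w+a$, so vanishing of the total residue sum gives $\int_\gamma = -\mathrm{Res}_{\tau=w+a}$. Using $\alpha=a+b$ this evaluates to $\frac{\Gamma(b+n+1-w)\Gamma(a+w)}{\Gamma(b+1-w)\Gamma(a+n+1+w)}$, which is exactly the Mellin--Barnes integrand of $G^{2,1}_{2,3}\!\left(\left.{-n-b,\,a+n+1\atop 0,\,a,\,-b}\;\right | z\right)$, establishing the identity.

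\medskip
The only delicate points are routine contour bookkeeping: (i) justifying the Fubini swaps from super-exponential decay of the $\Gamma$'s along the vertical tails of $\gamma$, together with the hypotheses $a,b,a+b>-1$; and (ii) verifying that $\tau=w+a$ is indeed the unique pole exterior to $\gamma$ for the relevant range of $w$, which amounts to a contour-location check. No special-function identity beyond the Mellin--Barnes dictionary is required; the real content is the compact evaluation $g(t)=P_n(1/(1+t))/(1+t)^{a+1}$, which is the manifestation at the integral level of the Jacobi/Cauchy-BOP correspondence of Proposition \ref{propJCBOP}.
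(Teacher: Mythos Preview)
Your proof is correct and arrives at the same Mellin--Barnes integrand, but it is organized differently from the paper's argument, and the difference is worth noting.

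The paper inserts the Mellin--Barnes representation of $\wt p_n$ first, swaps with the $x$-integral, and evaluates $\int_0^\infty \frac{x^{a-u}{\rm e}^{-x}}{z+x}\,\d x$ as ${\rm e}^z\Gamma(a+1-u)z^{a-u}\Gamma(u-a;z)$; it then \emph{splits} the incomplete gamma function as $\Gamma(u-a)-\frac{z^{u-a}}{a-u}\,{}_1F_1(u-a,u-a+1;-z)$. The $\Gamma(u-a)$ piece immediately yields the Meijer $G$-function after a shift, while the ${}_1F_1$ piece is expanded as a series and shown to contribute nothing: each coefficient is $\int_\gamma R(u)\,(u-a+j)^{-1}\,\d u$, which vanishes because $R(u)=\frac{\Gamma(\alpha+n+1-u)\Gamma(u)}{\Gamma(n+1+u)\Gamma(\alpha+1-u)}$ is rational with all poles inside $\gamma$ and $O(u^{-2})$ decay.

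Your route instead performs the Laplace transform first and recognizes the closed form $g(t)=(1+t)^{-a-1}P_n\!\big(1/(1+t)\big)$, which is a pleasant structural observation making the Jacobi/Cauchy-BOP correspondence of Proposition~\ref{propJCBOP} visible at the level of the integral. You then feed in the Mellin--Barnes representation of $P_n$ (rather than of $\wt p_n$) and of $\Gamma(\nu,z)$ (rather than splitting it), and evaluate the inner $\tau$-integral by the \emph{same} rational-function residue argument on the \emph{same} $R(\tau)$---only now the residue at the exterior pole $\tau=w+a$ produces the answer directly instead of a separate term vanishing.

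So the two proofs share their core mechanism (the rational-residue computation for $R$), but the paper's ``split and discard'' packaging is slightly quicker since it avoids the second Mellin--Barnes layer and the $(\tau,\sigma)\to(\tau,w)$ contour change; your packaging, on the other hand, exposes the Jacobi--polynomial substructure more transparently and needs no series expansion of ${}_1F_1$. Both are perfectly valid; the contour bookkeeping you flag (Fubini, the Mellin--Barnes validity of $\Gamma(\nu,z)$, and the location of $\tau=w+a$ relative to $\gamma$) is routine under the stated hypotheses.
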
 
\begin{proof} 
By Theorem \ref{wtpnthm} 
\begin{equation*}
\int_0^\infty \frac{x^a {\rm e}^{-x}\wt p_n(x)}{z+x} \d x=\int_\gamma \frac {\d u }{2\pi i} \frac {\Gamma( \alpha + n - u + 1)\Gamma(u)}{\Gamma( n +1 + u)\Gamma(\alpha  +1 - u)\Gamma (a + 1 - u)}\int_0^\infty  \frac{x^{a-u}{\rm e}^{-x}}{z+x} \d x,   
\end{equation*}
where the contour $\gamma$ is chosen for this computation in such a way that $\Re u<1+a$ 
in addition to $\Re u<1+\alpha$ required by Theorem \ref{wtpnthm}.  Furthermore, 
we move the contour $\gamma$ sufficiently to the right to ensure that $u=a$ is inside the contour.  The change of the order of integration is justified because both single variable integrals are uniformly convergent on compact sets of their respective complementary 
variables and the original iterated integral is 
absolutely convergent.  
The inner integral is up to a normalization the Kummer function of the second kind (see 
\cite{BW} p. 194).  
A quick way of computing this integral is to substitute $\frac{1}{z+x}=\int_0^{\infty} {\rm e}^{-(z+x)s} \d s$ then switch the order of iterated integrals to get: 
$$
\int_0^\infty  \frac{x^{a-u}{\rm e}^{-x}}{z+x} \d x={\rm e}^z \Gamma(a+1-u)z^{a-u}\Gamma(u-a;z), $$
where $\Gamma(u-a;z)$ is the incomplete gamma function.  To conclude this part of the computation 
one only needs to use $\Gamma(u-a;z)=\Gamma(u-a)+\gamma(u-a;z)=\Gamma(u-a)-
\frac{z^{u-a}}{a-u}~_1F_1(u-a,u-a+1;-z)$ which expresses the complementary incomplete 
gamma function $\gamma$ in terms of the hypergeometric function $~_1F_1$ (see 
\cite{Bateman1} p. 266, formula (22)) thus obtaining
$$
\int_0^\infty  \frac{x^{a-u}{\rm e}^{-x}}{z+x} \d x={\rm e}^z \Gamma(a+1-u)\Gamma(u-a)z^{a-u}-{\rm e}^z \Gamma(a-u)~_1F_1(u-a,u-a+1;-z).  
$$
Putting the first term on the right side back into the contour integral 
gives the claim if one uses the shift formula \eqref{eq:shifting}.  
To finish the proof we need to show that the contour integral of the second term 
vanishes.  To this end we write explicitly the resulting contour integral obtaining, after omitting ${\rm e}^z$ and performing elementary computations with the gamma functions, 
\begin{equation*}
\int_\gamma \frac {\d u }{2\pi i} \frac {\Gamma( \alpha + n - u + 1)\Gamma(u)}{\Gamma( n +1 + u)\Gamma(\alpha  +1 - u)}\sum_{j=0}\frac{1}{(u-a+j)j!} (-z)^j. 
\end{equation*}
 The series converges uniformly on $\gamma$ hence to prove that this integral is zero it suffices to prove 
 \begin{equation*}
\int_\gamma \frac {\d u }{2\pi i} \frac {\Gamma( \alpha + n - u + 1)\Gamma(u)}{\Gamma( n +1 + u)\Gamma(\alpha  +1 - u)}\frac{1}{(u-a+j)} =0, \quad j=0,1,\dots
\end{equation*}
The integrand is a meromorphic function with poles at $u=\{0,-1, \cdots, -n\} \cup \{a-j\}$ which, 
according to our choice of the contour $\gamma$, are all inside the contour.  
We can now extend this contour by adding a large circle with the center at $u=0$ and 
radius $r$ and observe that the integrand on such a large circle is, in view Lemma \ref{lemmagamma},  $\mathcal O(u^{-2})$.  This implies that in the limit of $r\rightarrow \infty$ 
the integral $\int_\gamma \frac {\d u }{2\pi i} \frac {\Gamma( \alpha + n - u + 1)\Gamma(u)}{\Gamma( n +1 + u)\Gamma(\alpha  +1 - u)}\frac{1}{(u-a+j)}=[\text{ sum of residues of the 
integrand outside of $\gamma$}]=0$.  
\end{proof} 
The formula for $p_n^{(1)}(z)$ follows now easily from Lemma \ref{lem:biglemma}  and 
Theorem \ref{thmpnqn}.  
\bt \label{auxpnthm}
For $z\in \C\setminus \R_+$ the auxiliary functions of the first kind $p_n^{(1)}$  have the following Meijer $G$-function representation
\begin{equation*}
p_n^{(1)}(z):=  \int_0^\infty \frac {p_n(x) x^a {\rm e}^{-x}\d x}{z-x}={\rm e}^{-z} (-1)^{n+1}\pi_n G^{2,1}_{2,3} \left(\left.{-n-b,a +n+1\atop 0,a, -b}\; \right |- z\right).  
\end{equation*}
A similar expression holds for $q^{(1)}_n$ by interchanging $a\leftrightarrow b$ and $\pi_n \leftrightarrow \eta_n$.\et
To see the relation between $p_n^{(1)}$ and $p_n$ we formulate the following equivalent 
representation of $p_n^{(1)}$ whose gamma part is identical to 
that for $p_n$; see \eqref{wtpnMeij}.  
\bc
For $z\in \C\setminus \R_+$ the auxiliary functions of the first kind admit an equivalent 
integral representation: 
\bea
p_n^{(1)}(z) =  {\rm e}^{-z} \ (-1)^{n+1}
  \pi_n \int_{ \gamma} \!\!\frac {\d u}{2\pi i} \frac {\Gamma(1+n+\alpha -u)\Gamma(u)}{\Gamma(
  1+\alpha -u)\Gamma(n+1+u)\Gamma(1+a-u)}(- z)^{a-u}\frac{\pi}{\sin(\pi(u-a))}
\eea

\ec
\begin{proof} 
 The $G$-function occurring in the representation of $p_n$ is $G^{1,1}_{2,3} \left(\left.{-n+\alpha, n+1\atop 0,-\alpha, -a}\; \right | z\right)=G^{1,1}_{2,3} \left(\left.{-n+\alpha, n+1\atop 0,-a, -\alpha}\; \right | z\right)$.  Likewise, in view of Theorem \ref{auxpnthm} and the shift formula \eqref{eq:shifting} 
the $G$-function in the representation of $p_n^{(1)}$  can be written as 
$(-z)^aG^{2,1}_{2,3} \left(\left.{-n-\alpha, n+1\atop -a,0, -\alpha}\; \right | -z\right)$.  
  Switching the first and the second coefficient in the bottom line 
of the latter $G$ symbol can be done if one uses {\sl Euler's reflection formula} to write 
$\Gamma(u-a)=\frac{ \pi }{\Gamma(1+a-u) \sin (\pi(u-a))}$ which implies the claim.
\end{proof} 
%
\begin{theorem} For $z\in \C\setminus \R_-$ the auxiliary functions of the second kind admit the $G$-function representation
\begin{equation}
p_n^{(2)}(z):=  -\int_0^\infty \int_0^\infty \frac {p_n(x) x^a y^b {\rm e}^{-x-y}\d x \d y }{(x+y)(z+y)}=(-1)^{n+1} \pi _n G^{3,1}_{2,3} \left(\left.{-n,\alpha +n+1\atop 0,\alpha, b}\; \right | z\right)
\end{equation}
\end{theorem}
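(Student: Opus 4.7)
The plan is to reduce the double integral to a single one by performing the $x$-integration using the already-established formula for $p_n^{(1)}$, and then to evaluate the remaining $y$-integral by a Mellin--Barnes computation, matching the output against the definition of the target Meijer $G$-symbol.

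First I would observe that, comparing with the definition \eqref{auxpn}, for $y>0$
\[
\int_0^\infty \frac{p_n(x)\, x^a {\rm e}^{-x}}{x+y}\,\d x = -p_n^{(1)}(-y).
\]
Since $-y \in \C\setminus\R_+$, Theorem~\ref{auxpnthm} gives $p_n^{(1)}(-y)={\rm e}^{y}(-1)^{n+1}\pi_n\, G^{2,1}_{2,3}\!\left(\left.{-n-b,\,a+n+1\atop 0,\,a,\,-b}\;\right|y\right)$. Substituting this into the definition of $p_n^{(2)}$ and using that ${\rm e}^{-y}\cdot {\rm e}^{y}=1$ reduces the problem to showing
\[
\int_0^\infty \frac{y^b}{z+y}\, G^{2,1}_{2,3}\!\left(\left.{-n-b,\,a+n+1\atop 0,\,a,\,-b}\;\right|y\right)\d y
\;=\; G^{3,1}_{2,3}\!\left(\left.{-n,\,\alpha +n+1\atop 0,\,\alpha,\,b}\;\right|z\right).
\]

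Next I would replace the $G^{2,1}_{2,3}$ on the left by its Mellin--Barnes representation
\[
G^{2,1}_{2,3}(\cdots\,|\,y)=\int_\gamma \frac{\d u}{2\pi i}\,\frac{\Gamma(u)\,\Gamma(a+u)\,\Gamma(1+n+b-u)}{\Gamma(1+b-u)\,\Gamma(a+n+1+u)}\, y^{-u},
\]
and swap the order of integration (justified by Fubini: along a vertical contour with $b<\Re u<b+1$ the integrand is super-exponentially decaying in $|\Im u|$ thanks to the Stirling-type bounds of Lemma~\ref{lemmagamma}, and the $y$-integral is absolutely convergent uniformly on compact subsets of such contours for $z\notin \R_-$). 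The inner integral is the elementary Mellin transform $\int_0^\infty \frac{y^{b-u}}{z+y}\d y = \Gamma(1+b-u)\,\Gamma(u-b)\,z^{b-u}$, valid precisely in the strip $b<\Re u<b+1$. The factor $\Gamma(1+b-u)$ cancels the identical factor in the denominator of the Meijer integrand, leaving
\[
\int_\gamma \frac{\d u}{2\pi i}\,\frac{\Gamma(u)\,\Gamma(a+u)\,\Gamma(u-b)\,\Gamma(1+n+b-u)}{\Gamma(a+n+1+u)}\, z^{b-u}.
\]
Finally, the substitution $v=u-b$ recasts this as
\[
\int_{\gamma'} \frac{\d v}{2\pi i}\,\frac{\Gamma(v)\,\Gamma(v+\alpha)\,\Gamma(v+b)\,\Gamma(1+n-v)}{\Gamma(\alpha+n+1+v)}\, z^{-v},
\]
which is exactly the Mellin--Barnes integral for $G^{3,1}_{2,3}\!\left(\left.{-n,\,\alpha +n+1\atop 0,\,\alpha,\,b}\;\right|z\right)$. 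Combining with the $(-1)^{n+1}\pi_n$ prefactor produced in the first step completes the proof.

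\paragraph{Main obstacle.} The only delicate point is the contour geometry: the Pochhammer-type contour $\gamma$ in the definition of the Meijer $G$-function must be deformable to a vertical line inside the strip $b<\Re u<b+1$ without crossing the poles of $\Gamma(u),\Gamma(a+u)$ (on the left) or of $\Gamma(1+n+b-u)$ (on the right). This is immediate when $b>0$, but in the range $-1<b\le 0$ allowed by Proposition~\ref{propJCBOP} one has to first argue for large enough $a,b$ and then extend the identity by analytic continuation in $(a,b)$, using that both sides of the claimed formula are meromorphic in these parameters and agree on an open set. This is the only step requiring care; the residue/Stirling bookkeeping needed to justify the contour moves and the Fubini interchange is standard.
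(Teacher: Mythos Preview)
Your proposal is correct and follows essentially the same route as the paper: reduce the double integral to a single one via $p_n^{(1)}(-y)$ using Theorem~\ref{auxpnthm}, then evaluate $\int_0^\infty \frac{y^b}{z+y}\,G^{2,1}_{2,3}(\cdots\mid y)\,\d y$. The only difference is in this last step: the paper simply quotes the standard Meijer-$G$ integral formula \eqref{eq:Gintegral} together with the shift identity \eqref{eq:shifting} (after which a matching $\Gamma$ in numerator and denominator cancels, dropping $G^{3,2}_{3,4}$ to $G^{3,1}_{2,3}$), whereas you prove that particular instance by hand via the Mellin--Barnes representation and the beta-integral $\int_0^\infty \frac{y^{b-u}}{z+y}\,\d y$. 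Your discussion of the contour issue for $-1<b\le 0$ and the analytic-continuation fix is sound; the paper sidesteps this by invoking the tabulated formula directly.
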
 
\begin{proof} Let us express $p_n^{(2)}$ in terms of $p_n^{(1)}$ as 
\begin{equation*}
p_n^{(2)}(z)=\int_0^{\infty} \frac{y^b e^{-y}}{z+y} p_n^{(1)}(-y) \d y.  
\end{equation*}
Since $p_n^{(1)}(-y)=(-1)^{n+1} \pi_n \, {\rm e} ^{y}\, G^{2,1}_{2,3} \left(\left.{-n-b,a+n+1\atop a,0, -b}\; \right | y\right)$ we obtain 
\begin{equation*}
p_n^{(2)}(z)=(-1)^{n+1} \pi_n \,\int_0^{\infty} \frac{y^b }{z+y}G^{2,1}_{2,3} \left(\left.{-n-b,a+n+1\atop a,0, -b}\; \right | y\right)\d y=(-1)^{n+1} \pi_n \,G^{3,1}_{2,3} \left(\left.{-n,\alpha +n+1\atop 0,\alpha, b}\; \right | z\right), 
\end{equation*}
by \eqref{eq:Gintegral} and \eqref{eq:shifting}.  
 \end{proof}
Again to see the relation between $p_n^{(1)}$ and $p_n^{(2)}$ we establish an equivalent 
representation
\bc For $z\in \C\setminus \R_-$ the auxiliary functions of the second kind admit an 
equivalent integral representation:
\begin{equation*}
p_n^{(2)}(z)=(-1)^{n+1} \pi _n \int_{ \gamma} \!\frac {\d u}{2\pi i} \, \frac {\Gamma(u)\Gamma(u+a)\Gamma(1+n+b-u)}{\Gamma(1+b-u)\Gamma(a+n+1+u)}z^{b-u}\frac{\pi}{\sin(\pi(u-b))}. 
\end{equation*}
\ec
\begin{proof} 
Using the shift formula given by \eqref{eq:shifting} we write $G^{3,1}_{2,3} \left(\left.{-n,\alpha +n+1\atop 0,\alpha, b}\; \right | z\right)=z^bG^{3,1}_{2,3} \left(\left.{-n-b,a+n+1\atop -b,a, 0}\; \right | z\right)$. Also, in view of Theorem \ref{auxpnthm}, the $G$ function in the representation of $p_n^{(1)}$ is $G^{2,1}_{2,3} \left(\left.{-n-b,a+n+1\atop 0,a, -b}\; \right | -z\right) $.  Switching the first and the last coefficient in the bottom line 
of the $G$ symbol can be done if one uses {\sl Euler's reflection formula} to write 
$\Gamma(u-b)=\frac{ \pi }{\Gamma(1+b-u) \sin (\pi(b-u))}$ which implies the claim.  
\end{proof}
\br
We can easily verify the jump of $p_n^{(2)}(z)$ 
for $z<0$ to be \newline 
$p_n^{(2)} (z)_+ - p_n^{(2)}(z)_- =2\pi i
  (-z)^{b} {\rm e}^{z}p_n^{(1)}(z)$. 
Indeed,  let $z<0$ then  
$$(z)^{b-u}_+-(z)^{b-u}_-=(-z)^{b-u} \left[e^{i\pi(b-u)}-e^{-i\pi (b-u)} \right]=(-z)^{b-u}
2i\sin(\pi (b-u)), $$  
since the cut for $z^x$ is by convention (see \ref{MGapp}) along the negative real axis of $z$.   
Thus, by the corollary above
we get: 
\begin{equation*}
p_n^{(2)} (z)_+ - p_n^{(2)}(z)_-= (-z)^b (-1)^{n+1}\pi_n
G^{2,1}_{2,3} \left(\left.{-n-b,a+n+1\atop 0,a, -b}\; \right | -z\right), 
 \end{equation*}
which by Theorem \ref{auxpnthm} implies $p_n^{(2)} (z)_+ - p_n^{(2)}(z)_-=2\pi i (-z)^b{\rm e}^{z} p_n^{(1)}(z)$.  
\er

\subsection{The kernels for  finite $n$}
In \cite{Bertola:CBOPs} we proved a variety of generalizations of Christoffel--Darboux identities.  In this section, however, we show that in the case at hand it is possible to compute the associated kernels directly and in an elementary way from the formul\ae\ above.  We will begin with $K^{(n)}_{00}$ as defined in \eqref{defK00}.  
\bt
\label{K00thm}
The principal kernel $K^{(n)}_{00} (x,y)$ is given by 
\begin{align}
&K^{(n)}_{00}(x,y)  =
\int_0^1 t^\alpha G_{a,n}(tx) G_{b,n}(ty)\d t,  \quad \text{ where } 
\label{K00conv}\\
&G_{c,n}(x) := 
\int_{\gamma} \frac {\d u }{2i\pi}  \frac{ x^{-u}}
{\Gamma (c + 1 - u)
} \frac {\Gamma(u) \Gamma(n + \alpha  + 1 - u)}{\Gamma(n+u)\Gamma(\alpha - u + 1)}
=G^{1,1}_{2,3} \left(\left.{-\alpha -n,n\atop 0,-c, -\alpha}\; \right | x\right).  \label{G}
\end{align}
\et
\begin{proof}
We start with writing the formula for the kernel in terms of $G$-functions.  By 
Theorem \ref{thmpnqn} $K^{(n)}_{00} (x,y) =\sum_{j=0}^{n-1} (2j+\alpha +1)G^{1,1}_{2,3} \left(\left.{-\alpha -j,j+1\atop 0,-\alpha, -a}\; \right | x\right)G^{1,1}_{2,3} \left(\left.{-\alpha -j,j+1\atop 0,-\alpha, -b}\; \right | y\right)$, while Christoffel-Darboux kernel for Jacobi polynomials 
is given by $K_n(x,y)=\sum_{j=0}^{n-1} (2j+\alpha+1)G^{1,1}_{2,2} \left(\left.{-\alpha -j,j+1\atop 0,-\alpha}\; \right | x\right)G^{1,1}_{2,2} \left(\left.{-\alpha -j,j+1\atop 0,-\alpha}\; \right | y\right)$ (see Propositions \ref{kernelJacobin} and \ref{prop:pnghatn}).  This means that 
the integral representations obtained for $K_n(x,y)$ are only modified by 
extra factors $\frac{1}{\Gamma(1+a-u)}, \frac{1}{\Gamma(1+b-v)}$ coming from the 
additional third index appearing in the bottom of the respective $G$-symbols of $\wt p_j(x), \wt q_n(y)$ which proves the claim.  
\end{proof}
\br  
We can write explicitly the double integral representation of the kernel by copying 
the analogous formula from Theorem \ref{kernelJacobin} and attaching 
additional factors $\frac{1}{\Gamma(1+a-u)}, \frac{1}{\Gamma(1+b-v)}$ as explained in the proof above.  The formula reads: 
\begin{multline}
K^{(n)}_{00}(x,y) =\\\int_{\gamma^2} \frac {\d u }{2\pi i} \frac {\d v }{2\pi i} \frac{ x^{-u} y^{-v}}
{\Gamma (a + 1 - u)\Gamma (b+ 1 - v)(1+\alpha -v-u)
} \frac {\Gamma(u)\Gamma(v) \Gamma(n + \alpha  + 1 - u)\Gamma( n + \alpha + 1 - v)}{\Gamma(n+u)\Gamma(n+v)\Gamma(\alpha - u + 1)\Gamma(\alpha - v + 1)}.  \label{K00int}
\end{multline}
Note that this integral representation, as opposed to the one appearing in \ref{kernelJacobin}, is 
valid for any $x,y \neq 0$. Observe also that the function 
$G_{c,n} (x)=G^{1,1}_{2,3} \left(\left.{-\alpha -n,n\atop 0,-c, -\alpha}\; \right | x\right)$ is a natural generalization of the function $\wh G_n(x)=G^{1,1}_{2,2} \left(\left.{-\alpha -n,n\atop 0,-\alpha}\; \right | x \right)$ known from Theorem \ref{kernelJacobin}.  
\er
%

We are now ready to compute the remaining kernels. Using Lemma \ref{nicesum}  we find 

\bt
\label{K10thm}
The kernel $K^{(n)}_{10}(x,y)$ is given by 
 \bea
 &\& K^{(n)}_{10}(x,y)
 =x^a {\rm e}^{x} \int_0^1 t^\alpha  \wt G_{a,n}(tx)G_{b,n} (ty)\, \d t, \quad \text{where} \\
 && \wt G_{c,n}(x):=  \int_{ \gamma} \!\!\frac {\d u}{2\pi i} 
 \frac { \Gamma(u-c)\Gamma(u)\Gamma  \left( n+\alpha+1-u \right) }{ \Gamma(\alpha  + 1-u)  \Gamma  \left( n+u \right)} x^{-u}=G^{2,1}_{2,3} \left(\left.{-\alpha -n,n\atop 0,-c,-\alpha}\; \right | x\right).  \label{wtG}
\eea
Likewise, by symmetry, the kernel $K^{(n)}_{01}(x,y)$ is given by:
\bea
K^{(n)}_{01}(x,y) =y^b {\rm e}^{y}   \int_0^1 t^\alpha G_{a,n}(tx)\wt G_{b,n} (ty)\, \d t, 
 \eea
 with $\wt G_{c,n}$ defined in \eqref{wtG} and $G_{c,n}$ defined in \eqref{G}. 
\et
\begin{proof} From the definition of $K^{(n)}_{10}$ in \eqref{defK10}, Theorem \ref{auxpnthm} 
and \eqref{eq:shifting} we obtain 
\begin{align*}
&K^{(n)}_{10}(x,y) :=-\sum_{j=0}^{n-1} p^{(1)}_j(-x)q_j(y)=
x^{a}{\rm e}^{x}  
\sum_{j=0}^{n-1} (2j +\alpha+1)G^{2,1}_{2,3} \left(\left.{-\alpha -j,j+1\atop 0,-a, -\alpha}\; \right | x\right)G^{1,1}_{2,3} \left(\left.{-\alpha -j,j+1\atop 0,-\alpha, -b}\; \right | y\right).  
\end{align*}
We observe that the dependence on $j$ is identical to 
the one covered in Lemma \ref{nicesum}.  Hence by that lemma 
\begin{align*} 
&K^{(n)}_{10}(x,y)=x^a {\rm e}^{x}\le \{ \int_0^1 t^{\alpha} G^{2,1}_{2,3} \left(\left.{-\alpha-n,n\atop 0,-a, -\alpha}\; \right | tx\right)G^{1,1}_{2,3} \left(\left.{-\alpha -n,n\atop 0,-\alpha, -b}\; \right | ty\right)\! \d t-\ri.\\
&\le. \int_0^1 t^{\alpha} G^{1,0}_{0,1} \left(\left.{\atop -a}\; \right | tx\right)G^{0,0}_{0, 1} \left(\left.{\atop -b}\; \right | ty\right) \! \d t \ri\}=x^a {\rm e}^{x}\int_0^1 t^{\alpha} G^{2,1}_{2,3} \left(\left.{-\alpha-n,n\atop 0,-a, -\alpha}\; \right | tx\right)G^{1,1}_{2,3} \left(\left.{-\alpha -n,n\atop 0,-\alpha, -b}\; \right | ty\right)\! \d t, 
\end{align*}
since the second term,  or more precisely $G^{0,0}_{0, 1} \left(\left.{\atop -b}\; \right | ty\right)$, vanishes by \eqref{eq:Gzero}.  
\end{proof}
\bt
\label{K11thm}
The Kernel $K^{(n)}_{11}(x,y)$ is given by
 \be
 K^{(n)}_{11}(x,y) =x^a y^b {\rm e}^{x+y} \int_0^1  \wt G_{a,n}(tx) \wt G_{b,n}(ty) t^\alpha \d t - \frac{{\rm e}^{x+y}}{x+y}
 \label{K11conv}
 \ee
 with $\wt G_{c,n}$ defined in \eqref{wtG}.
\et
\begin{proof}
We plug the expressions for $p_n^{(1)}, q_n^{(1)}$ of Theorem \ref{auxpnthm} into the definition of $K^{(n)}_{11}$ \eqref{defK11} to get
\begin{align*}
& K^{(n)}_{11}(x,y)+\frac 1{x+y} : = \sum_{j=0}^{n-1} p^{(1)}_j(-x) q^{(1)}_j(-y) =\\
&x^a y^b {\rm e}^{x+y} \sum_{j=0}^{n-1} (2j+\alpha + 1)G^{2,1}_{2,3} \left(\left.{-\alpha -j,j+1\atop 0,-a, -\alpha}\; \right | x\right)G^{2,1}_{2,3} \left(\left.{-\alpha -j,j+1\atop 0,-b, -\alpha}\; \right | y\right). 
\end{align*}
Again, the $j$-dependence of this expression is covered by Lemma \ref{nicesum} and applying it  we obtain: 
\begin{align*}
& K^{(n)}_{11}(x,y)+\frac 1{x+y} =x^ay^b  {\rm e}^{x+y}\big \{ \int_0^1 t^{\alpha} G^{2,1}_{2,3} \left(\left.{-\alpha-n,n\atop 0,-a, -\alpha}\; \right | tx\right)G^{2,1}_{2,3} \left(\left.{-\alpha -n,n\atop 0,-b, -\alpha}\; \right | ty\right)\! \d t-\\
&\int_0^1 t^{\alpha} G^{1,0}_{0,1} \left(\left.{\atop -a}\; \right | tx\right)G^{1,0}_{0, 1} \left(\left.{\atop -b}\; \right | ty\right) \! \d t \big \}.  
\end{align*}
By inspection $G^{1,0}_{0,1} \left(\left.{\atop -a}\; \right | tx\right)=(tx)^{-a} {\rm e} ^{-tx}$, hence 
an elementary computation gives the final answer
\begin{align*}
& K^{(n)}_{11}(x,y)+\frac 1{x+y}  =x^ay^b  {\rm e}^{x+y}\le \{ \int_0^1 t^{\alpha} G^{2,1}_{2,3} \left(\left.{-\alpha-n,n\atop 0,-a, -\alpha}\; \right | tx\right)G^{2,1}_{2,3} \left(\left.{-\alpha -n,n\atop 0,-b, -\alpha}\; \right | ty\right)\! \d t+\ri.\\
&\le. \frac{1}{x^a y^b} \frac{({\rm e}^{-(x+y)}-1)}{x+y} \ri\}, 
\end{align*}
which implies the claim.  \end{proof}
\br Similar to the integral formula for the kernel $K^{(n)}_{00}$ (see \eqref{K00int}) 
all the remaining kernels have analogous integral representations in terms of the 
double path integral, all involving the kernel $\frac{1}{1+\alpha-u -v}$.  
\er
\subsection{The Meijer-G random point field}
\label{MGrpf}
In order to describe the statistics of the Cauchy matrix model near $x=0=y$   we recall the definition of the kernels for the correlation functions, that is, the $K$ and $H$ kernels defined 
by \eqref{eq:Ks}, \eqref{eq:Hs} respectively.
 
\subsubsection{Asymptotics of the kernels $K^{(n)}_{\mu\nu}$}
From the explicit integral  expressions for the functions $G_{c,n}, \wt G_{c,n}$
and the kernels in Theorems \ref{K00thm}, \ref{K10thm}, \ref{K11thm},  we shall now derive the behaviour under the rescaling $
x = \frac {\zeta} {n^2} \ ,  y = \frac { \xi} {n^2},\ \ n\to\infty.$
\br
\label{remrescaling}
It should be mentioned here that if we had started with a model in the form 
\be
\d \mu(M_1,M_2) = \frac {1 }{\mathcal Z_n}  \frac{ (\det M_1)^a(\det M_2)^b {\rm e}^{-n \tr (M_1 + M_2)}}{\det (M_1 + M_2)^n}\, \d M_1 \d M_2, 
\ee
then the relevant rescaling would have  been $x\mapsto xn^{-3}$,
because the $n$ scaling in the exponential is simply absorbed by a rescaling $\wt x=n x$, eventually giving the scaling by $n^{-2}$.  
\er

\bt[Kernels in the asymptotic regime]
\label{kerasymptthm}
Let 
\be
x = \frac \zeta {n^2}\le(\frac {n}{n+1}\ri)^\alpha ,\qquad y=  \frac \xi {n^2}\le(\frac {n}{n+1}\ri)^\alpha.  
\label{rescaling}
\ee
The following asymptotic estimates for $n\to \infty$ hold uniformly on compact subsets of the independent variables 
\bea
n^{-\alpha -1} \le(\frac {n}{n+1}\ri)^{\frac {\alpha(\alpha+1)}2} 
G_{c,n}\le (
x
\ri) &\& = \overbrace{\int_{\gamma} \frac {\d u }{2\pi i} \frac {\Gamma(u)}{\Gamma (c + 1 - u)\Gamma(\alpha +1- u )}\zeta^{-u}}^{=: G_c(\zeta )} + \mathcal O(n^{-2}), \\
n^{-\alpha -1}\le(\frac {n}{n+1}\ri)^{\frac {\alpha(\alpha+1)}2} 
\wt G_{c,n}\le (
x
\ri)
 &\& =\underbrace{  \int_{ \gamma} \!\!\frac {\d u}{2\pi i} 
 \frac { \Gamma(u-c)\Gamma(u)}{ \Gamma(\alpha  + 1-u)} \zeta ^{-u}}_{=: \wt G_c(\zeta )} + \mathcal O(n^{-2}), 
\eea
or in terms of $G$-functions 
\begin{align*} 
n^{-\alpha -1}\le(\frac {n}{n+1}\ri)^{\frac {\alpha(\alpha+1)}2}  
G^{1,1}_{2,3} \left(\left.{-\alpha -n,n\atop 0,-c, -\alpha}\; \right | 
x
 \right)
=G^{1,0}_{0,3} \left(\left.{\atop 0,-c, -\alpha}\; \right | \zeta \right)+\mathcal O(n^{-2}),\\
n^{-\alpha -1}\le(\frac {n}{n+1}\ri)^{\frac {\alpha(\alpha+1)}2} G^{2,1}_{2,3} \left(\left.{-\alpha -n,n\atop 0,-c, -\alpha}\; \right |
x
\right)
=G^{2,0}_{0,3} \left(\left.{\atop 0,-c, -\alpha}\; \right | \zeta \right)+\mathcal O(n^{-2}).  
\end{align*}

Consequently, the asymptotic behaviour of the four kernels is

\bea
 &\& n^{-2\alpha -2}\le(\frac {n}{n+1}\ri)^{\alpha(\alpha+1)}\hspace{-10pt}
 K^{(n)}_{00}\le (
 x,y
 \ri) \to \int_0^1 G_{a}(t\zeta ) G_{b}(t\xi ) t^\alpha \d t,
 \\
 &\&
 n^{-2a -2}\le(\frac {n}{n+1}\ri)^{\alpha a+\alpha} K^{(n)}_{01}
\le (
x,y
\ri) 
\to \xi^b \int_0^1 G_{a}(t\zeta) \wt G_{b}(t\xi) t^\alpha \d t, \\
 &\&
 n^{-2b-2} \le(\frac {n}{n+1}\ri)^{\alpha b+\alpha} 
 K^{(n)}_{10}
 \le (
 x,y
 \ri) 
 \to \zeta^a \int_0^1 \wt G_{a}(t\zeta) G_{b}(t\xi) t^\alpha \d t, \\
 &\&
 n^{-2} \le(\frac {n}{n+1}\ri)^{\alpha}K^{(n)}_{11}
 \le (
 x,y
 \ri) 
 \to  \zeta^a \xi ^b\int_0^1 \wt G_{a}(t\zeta ) \wt G_{b}(t\xi) t^\alpha \d t  - \frac{1}{\zeta +\xi }, 
\eea
and the convergence of the above limits is all within $\mathcal O(n^{-2})$.  
\et
\begin{proof} 
The proof of this theorem is immediate from the Stirling approximation formula in Lemma \ref{lemmagamma}, the explicit integral expressions for $G_{c,n}, \wt G_{c,n}$ in \eqref{G}, \eqref{wtG} and the convolution form of the kernels in Theorems \ref{K00thm}, \ref{K10thm}, \ref{K11thm}. \end{proof}

Now we turn our attention to the $H$ kernels defined in \eqref{eq:Hs}.  
Using the rescaling \eqref{rescaling} and the asymptotics of the kernels in Theorem \ref{kerasymptthm} we find the following expressions
\bea
\lim_{n\to \infty}n^{-2}  \le(\frac {n}{n+1}\ri)^{\alpha}H^{(n)}_{00} (x,y) &\&=  \zeta^a \xi ^b  \int_0^1 G_{a}(t\zeta) G_{b}(t\xi) t^\alpha \d t, 
\\
\lim_{n\to \infty}n^{-2} \le(\frac {n}{n+1}\ri)^{\alpha} H^{(n)}_{01}(x,y) &\& =\zeta^{ a}\xi^{b}   \int_0^1 G_{a}(t\zeta) \wt G_{b}(t\xi) t^\alpha \d t, \\
\lim_{n\to \infty}n^{-2}  \le(\frac {n}{n+1}\ri)^{\alpha}H^{(n)}_{10}(x,y) &\& = \zeta ^{a} \xi^{b } \int_0^1 \wt G_{a}(t\zeta) G_{b}(t\xi) t^\alpha \d t, \\
\lim_{n\to \infty}n^{-2}  \le(\frac {n}{n+1}\ri)^{\alpha}H^{(n)}_{11}(x,y) &\& =  \zeta^{a}\xi^{b}
\int_0^1 \wt G_{a}(t\zeta) \wt G_{b}(t\xi ) t^\alpha \d t  - \frac{1}{\zeta+\xi}. 
\eea
\br
The scaling factor $n^{-2}$ in front of the kernels $H^{(n)}_{\mu\nu}$ is natural since kernels transform as $\sqrt{\d x \d y}$. The convergence to the limits is at the rate $\mathcal O(n^{-2})$ (as per Theorem \ref{kerasymptthm}).
\er
The limiting kernels above define a novel class of two-level  random point fields.  
\bp[Meijer-G random point field and universal class]
\label{MeijerGK}
In the scaling limit the correlations of the eigenvalues of $M_1,M_2$ are determined by the two--level random point field with kernels
\begin{subequations}
\begin{align}\label{eq:Gs0}
\mathcal G_{00} (\zeta,\xi) &=   \zeta^a \xi^b \int_0^1 G_{a}(t\zeta) G_{b}(t\xi) t^\alpha \d t, &
\mathcal G_{01}(\zeta,\xi) &=\zeta^a \xi ^{b}   \int_0^1 G_{a}(t\zeta) \wt G_{b}(t\xi) t^\alpha \d t, \\
\mathcal G_{10}(\zeta,\xi) &= \zeta^{a}\xi ^b \int_0^1 \wt G_{a}(t\zeta) G_{b}(t\xi) t^\alpha \d t,&
\mathcal G_{11}(\zeta,\xi) &= \zeta^{a}\xi^{b} 
\int_0^1 \wt G_{a}(t\zeta) \wt G_{b}(t\xi ) t^\alpha \d t  - \frac{1}{\zeta+\xi},  
\end{align}
\end{subequations}
where $\xi,\zeta \in\R_+$.  
\ep

If we absorb the powers in the definition of $H_a,\wt H_a$ as in Definition \ref{defHfun} then  the formulas become even more symmetric.  
\bc \label{MeijerGH}
In the scaling limit the correlations of the eigenvalues of $M_1,M_2$ are determined by the two--level random point field with kernels
\begin{subequations}
\begin{align}\label{eq:Gs}
\mathcal G_{00} (\zeta,\xi) &=   \int_0^1 H_{a}(t\zeta) H_{b}(t\xi ) \d t, &
\mathcal G_{01}(\zeta,\xi) &=\int_0^1 H_{a}(t\zeta) \wt H_{b}(t\xi) \d t, \\
\mathcal G_{10}(\zeta,\xi) &= \int_0^1 \wt H_{a}(t\zeta) H_{b}(t\xi) \d t,&
\mathcal G_{11}(\zeta,\xi) &= 
\int_0^1 \wt H_{a}(t\zeta) \wt H_{b}(t\xi ) \d t  - \frac{1}{\zeta+\xi},  
\end{align}
\end{subequations}
where $\xi,\zeta \in\R_+$ and the kernels are given by the Meijer $G$ functions 
\begin{equation*}
H_c(z)=G^{1,0}_{0,3} \left(\left.{\atop c,0, -\alpha+c}\; \right | z \right), 
\quad \wt H_c(z)=G^{2,0}_{0,3} \left(\left.{\atop c,0, -\alpha+c}\; \right | z \right).  
\end{equation*}

\ec

Thus we have completed the proof of Theorem \ref{MeijerGKprime} stated in the introduction.

\br  It is easy to see by direct computation that the functions $H_c, \tilde H_c$ are connected to 
hypergeometric functions $~_0F_2$.  For example 
\begin{equation} \label{eq:Ha}
H_a(z)={\frac {{z}^{a}~_0F_2\left( {\atop 1+a,1+\alpha},-z \right) }{
\Gamma  \left( 1+a \right) \Gamma  \left( 1+\alpha \right) }}, 
\end{equation}
\er
while $\tilde H_a(z)$ is, for $a\neq 0,1,\cdots$, a linear combination of two 
hypergeometric functions of type $~_0F_2$
\begin{equation} \label{eq:tilHa} 
\tilde H_a(z)={\frac {\Gamma  \left( a \right) ~_0F_2\left( {\atop 1+b,1-a},z
 \right) }{\Gamma  \left( 1+b \right) }}+{\frac {\Gamma  \left( -a
 \right) {z}^{a}~_0F_2\left( {\atop 1+a,1+\alpha},z \right) }{
\Gamma  \left( 1+\alpha \right) }}. 
\end{equation}
 The remaining cases $a=n=0,1,\cdots$ are handled by 
 taking limits $a\rightarrow n$.  For example
 \begin{equation*}
\wt H_0(z) =-\frac{\ln z}{\Gamma(1 + b)} + \mathcal O(1)\ \ \ \ \hbox { if } a=0.  
\end{equation*}

\subsubsection{Expressions of the kernels in ``integrable'' form}
We note that the functions $H_a(t), \wt H_a(t)$ solve differential equations of the third order; this fact is implied by their identification as Meijer-G functions.  A simple derivation is as follows: denote by $A(u)$ the rational expression  of $\Gamma$ functions in the integrand of $H_c$ or $\wt H_c$; then a direct inspection shows 
\bea
A(u+1) = \mp u (u+c)(\alpha - c -u) A(u)  \ \ \ \Rightarrow \ \ \  \ 
\Delta_\zeta (\Delta_\zeta + \alpha - c) (\Delta_\zeta - c) f(\zeta) = \mp \zeta f(\zeta), 
\eea
where $\Delta_\zeta=\zeta \frac{d}{d\zeta}$, the upper sign is for $H_c$, the lower for $\wt H_c$ respectively.  
Now, let $c\in \{a, b\}$ and 
denote by $f_t(\zeta):= f (t\zeta)$, $g_t(\xi):= g(t\xi)$, and let $f$ be one of $H_{a}$ or $\wt H_a$ and $g$ one of $H_b$ or $\wt H_b$. Since $\Delta $ is scale invariant we have 
\begin{subequations}
\begin{align} 
 &g_t(\xi)\Delta_\zeta (\Delta_\zeta + b) (\Delta_\zeta - a) f_{t}(\zeta) =\mp t\zeta f_t (\zeta) g_{t}(\xi), \\
 & f_{t}(\zeta) \Delta_\xi (\Delta_\xi + a) (\Delta_\xi - b) g_{t}(\xi) = \mp t\xi f_{t}  (\zeta) g_{t}(\xi).  
\end{align}
\end{subequations}
Adding these two equations together, dividing by $t$ and using that $\Delta_\zeta f_t(\zeta) = \Delta_t f_t(\zeta)$ (and similarly for $g_t(\xi)$) we find 
\begin{equation}
\pa_t \big[g_t \Delta^2_t f_t-\Delta_t g_t \Delta_t f_t+f_t \Delta^2_t g_t +(b-a)(g_t \Delta_t f-
f_t \Delta_t g_t)-ab f_t g_t\big ]=
(\mp \zeta +\mp \xi) f_t g_t.  
\end{equation} 
Integrating with respect to $t$ from $0$ to $0<\tau$ we obtain 
\begin{multline}
(\mp\zeta+\mp \xi)\int_{0}^\tau  f_t g_t \d t =   f_t\Delta_t^2 g_t - \Delta_tf_t \Delta_t g_t +  g_t \Delta_t^2 f_t 
  +(b-a)\le( g_t\Delta_t f_t - f_t\Delta_t g \ri) 
 - ab \,f_t g_t\Bigg|_{0+}^\tau=\\
\le[f(t\zeta), \Delta_\zeta f(t\zeta), \Delta^2_\zeta f(t\zeta)\ri] \le[
 \begin{array}{ccc}
 -ab & a-b &1\\
 b-a& -1 & 0\\
 1&0&0
 \end{array}
 \ri]\le[\begin{array}{c}
 g(t\xi)\\
 \Delta_\xi g(t\xi)\\
 \Delta^2_\xi  g(t\xi)
 \end{array}\ri]_{t=0+}^\tau, 
 \label{eval}
\end{multline}
where in the last line we used again the scale invariance of $\Delta$.  
The last computation motivates the following definition
\bd
\label{defB}
Let $f=f(\zeta)$ and $g=g(\xi)$.  Then the  {\bf point-split bilinear concomitant} is defined as 
\begin{multline}
\mathcal B(f, g):=  f\Delta_\xi^2 g - (\Delta_\zeta f) (\Delta_\xi g) +  g \Delta^2_\zeta f  
  +(b-a)\le( g\Delta_\zeta f - f\Delta_\xi g \ri) 
 - ab \,f g
 =\\
 \le[f, \Delta_\zeta f, \Delta^2_\zeta f\ri] \le[
 \begin{array}{ccc}
 -ab & a-b &1\\
 b-a& -1 & 0\\
 1&0&0
 \end{array}
 \ri]\le[\begin{array}{c}
 g\\
 \Delta_\xi g \\
 \Delta^2_\xi  g
 \end{array}\ri].  \label{bilconc} 
\end{multline}

\ed
 If $\zeta=\pm \xi$, suitably chosen to make the left side of \eqref{eval} vanish, then \eqref{eval} implies $\pa_\xi \mathcal B(f,g)=0$. This is a form of the {\em bilinear concomitant} \cite{InceBook} for equations in duality, which explains our naming convention.
 
From this point onward we will be interested in $\tau=1$.  
The evaluation of the right hand side of \eqref{eval} at $t=0+$  is case dependent. 

\paragraph{Case $H_a, H_b$.}
If $f=H_a,\ g=H_b$ then the evaluation at  $t=0$ vanishes:
indeed by \eqref{eq:Ha}
\be
H_c(\zeta) = \zeta^c\le(\frac 1{\Gamma( 1+c)\Gamma( 1+\alpha ) } + \mathcal O(\zeta)\ri), 
\ee
and it suffices therefore to verify $\mathcal B(\zeta^a,\xi^b)=0$ and
$$ \lim_{t\rightarrow 0+} \mathcal B((t\zeta)^{a+k},(t\xi)^{b+l})=\lim_{t\rightarrow 0+} t^{\alpha +k+l} \mathcal B(\zeta^{a+k},\xi^{b+l}) =0, \text{ for } 1\leq k+l\ ,
$$
 since $\alpha+1>0$.

\paragraph{Case $\wt H_a, H_b$ or vice versa.}
This case is similar to the previous case; one uses \eqref{eq:tilHa} and analyzes cases.  The relevant points to remember are that  by our assumptions $a+1>0, b+1>0$ and $\alpha +1>0$.  
In all cases $\mathcal B$ vanishes at $t=0+$.  

\paragraph{Case $\wt H_a, \wt H_b$.}
In this case the $\lim_{t\rightarrow 0+} \mathcal B(\wt H_a,\wt H_b)(t\zeta,t\xi)=-1$.  This follows from the 
fact that in the expansion of $\wt H_a$ (or $\wt H_b$) there is one more pairing 
than in previous cases.  This is the pairing of the leading term in \eqref{eq:tilHa}, namely in 
${\frac {\Gamma  \left( a \right) ~_0F_2\left( {\atop 1+b,1-a},z
 \right) }{\Gamma  \left( 1+b \right) }}$, with its analog in $\wt H_b$. Extracting the leading term 
 leads to $\mathcal B(\frac{\Gamma(a)}{\Gamma(1+b)}, \frac{\Gamma(b)}{\Gamma(1+a)})=-ab\frac{\Gamma(a)\Gamma(b)}{\Gamma(1+a)\Gamma(1+b)}=-1$.    

It is now elementary to substitute the above results into \eqref{eval} and, further,  
into the definition of $\mathcal G$ kernels given by \eqref{eq:Gs}.  
\bp
\label{propConcomitant}
Let $\mathcal B$ be the point-split bilinear concomitant defined by \eqref{defB}.  
Then the correlation kernels of the Meijer-G random point field satisfy 
\begin{subequations}
\begin{align}
&\mathcal G_{00}(\zeta,\xi)=- \frac{\mathcal B(H_a(\zeta),H_b(\xi))}{\zeta+\xi},& 
 & \mathcal G_{01}(\zeta,\xi)=\frac{\mathcal B(H_a(\zeta),\wt H_b(\xi))}{-\zeta+\xi}, \\
 &\mathcal G_{10}(\zeta,\xi)=\frac{\mathcal B(\wt H_a(\zeta),H_b(\xi))}{\zeta-\xi},&
 & \mathcal G_{11}(\zeta,\xi)=\frac{\mathcal B(\wt H_a(\zeta),\wt H_b(\xi))}{\zeta+\xi}.  
\end{align}
\end{subequations}
\ep

\paragraph{Acknowledgements}
This paper was initiated at the Banff International Research Station. We thank BIRS for the hospitality and for providing excellent work conditions. M.B. wishes to thank SISSA (Trieste) for hospitality during which part of the work was completed. 
 M. B. and J.S. acknowledge a support by Natural Sciences and Engineering Research Council of Canada. M.G. is partially supported by the NSF grant DMS-1101462.

\appendix
\renewcommand{\theequation}{\Alph{section}.\arabic{equation}}

\section{The Meijer-G functions}
\label{MGapp}
For the convenience of the reader we recall the definition of the Meijer-G function
\be
G_{p,q}^{\,m,n} \!\left( \le.{ a_1, \dots, a_p \atop  b_1, \dots, b_q } \; \right| \, z \right) = \int_\gamma \frac {\d u }{2\pi i}\, \frac{\prod_{j=1}^m \Gamma(b_j + u) \prod_{j=1}^n \Gamma(1 - a_j -u)} {\prod_{j=m+1}^q \Gamma(1 - b_j -u ) \prod_{j=n+1}^p \Gamma(a_j +u )} \,z^{-u}.  
\ee
The contour $\gamma$ is a contour that leaves the poles of the $\Gamma(b_j + u)$'s to the left and the poles of the $\Gamma( 1 - a_j - u)$'s to the right (the implicit assumption is that none of the poles of the former coincides with any of the poles of the latter). The contour extends to infinity with $|\arg(u)| <\frac \pi 2$ (in the right half plane) if $p>q$, otherwise it extends to  negative infinity with $\frac \pi 2<\arg(u) <\frac { 3 \pi }2$ (in the left half plane).  It is the latter case that is relevant for us.  
In the paper, we assume that the branch of $z^{-u}$ is chosen in such a way that the cut is 
for $z<0$.  
There is an considerable applied mathematics literature on $G$-functions (e.g. \cite{Luke}).  
For the reader's convenience we collect several formulas used in the paper.  
\begin{enumerate} 
\item Shifting formula: \begin{equation}\label{eq:shifting} 
z^{\sigma} G_{\, p,q}^{\,m,n} \!\left({\mathbf{a_p} \atop  \mathbf{b_q }} \; | \, z \right) =
G_{\, p,q}^{\,m,n} \!\left(\left.{\mathbf{a_p} +\sigma\atop  \mathbf{b_q }+\sigma} \; \right | \, z \right), 
\end{equation} 
where $+ \sigma$ means shifting every entry by $\sigma$.  
\item Integrals containing products of two $G$-functions: 
\begin{equation}
\int_0^{\infty} G_{p,q}^{\,m,n} \!\left( \left. \begin{matrix} \mathbf{a_p} \\ \mathbf{b_q} \end{matrix} \; \right| \, \frac{x}{\eta} \right)
G_{\sigma, \tau}^{\,\mu, \nu} \!\left( \left. \begin{matrix} \mathbf{c_{\sigma}} \\ \mathbf{d_\tau} \end{matrix} \; \right| \, \omega x \right) dx 
= \eta  \; G_{q + \sigma ,\, p + \tau}^{\,n + \mu ,\, m + \nu} \!\left( \left. \begin{matrix} - b_1, \dots, - b_m, \mathbf{c_{\sigma}}, - b_{m+1}, \dots, - b_q \\ - a_1, \dots, -a_n, \mathbf{d_\tau} , - a_{n+1}, \dots, - a_p \end{matrix} \; \right| \, \eta \omega \right).  
\end{equation}
Since $G_{1,0}^{\,1,1} \!\left( \left. \begin{matrix} a \\ b \end{matrix} \; \right| \, -z\right)$
is proportional to $z^b ~_1F_0(1+b-a,- z)=z^b\frac{1}{(1+z)^{1+b-a}}$ we obtain \cite{Luke}
\begin{equation}\label{eq:Gintegral} 
\int_0^{\infty} \frac{y^{\alpha -1}}{(z+y)^{\sigma} }G_{\, p,q}^{\,m,n} \!\left(\left.{\mathbf{a_p} \atop  \mathbf{b_q }} \; \right | \, \mu y \right) \d y =\frac{z^{\alpha-\sigma}}{\Gamma(\sigma)}G_{\, p+1,q+1}^{\,m+1,n+1} \!\left(\left.{1-\alpha,\mathbf{a_p} \atop  \sigma-\alpha, \mathbf{b_q }} \; \right | \, \mu z\right).  
\end{equation} 
\item For $p\leq q$ 
\begin{equation} \label{eq:Gzero} 
G_{\, p,q}^{\,0,n} \!\left({\mathbf{a_p} \atop  \mathbf{b_q }} \; | \, z \right) =0.  
\end{equation} 
\end{enumerate}

\section{Correlation functions}\label{Correlapp}
In this appendix we recall the definitions and formulas needed to study the correlation functions for generic Cauchy matrix models (no restrictions on the measures).  We use the notation that is slightly different than the one used in \cite{Bertola:CauchyMM} to accommodate the needs of the present paper.  
Given two measures $d\alpha(x) =\alpha(x)\d x, \, \d \beta(y)=\beta(y)\d y$ with densities $\alpha(x),\beta(y)$ respectively, both supported on the positive half-line $\R_+$, the Cauchy matrix 
model is the probability measure on the pairs $(M_1,M_2)$ of positive semi-definite 
$N\times N$ matrices given by
$\ds 
\d \mu(M_1,M_2)=\frac{ \alpha(M_1) \beta(M_2) \d M_1 \d M_2}{\mathcal Z_N \det(M_1+M_2)^N}, $
where $\alpha(M)$ (or $\beta(B)$) stands for the induced measure on the spectrum of $M$, i.e. $\prod_j \alpha(x_j)$ where $x_j$ are the (positive) eigenvalues of $M$.
Let us introduce two families of polynomials $\{p_j(x), j=0,1,.. \}$ and $\{q_j(y), j=0,1,..\}$ 
which are biorthonormal with respect to the pairing with the Cauchy kernel
\begin{equation*}
\iint _{\R _+^2} \frac{p_j(x) \alpha(x) q_k(y) \beta(y)} {x+y} \d x \d y =\delta_{j,k}, 
\end{equation*}
with the technical proviso that the leading coefficients are identical and positive 
to render all polynomials unique.  Following \cite{Bertola:CauchyMM}, but changing slightly the notation, we introduce four kernels used in the present paper
\begin{align*}
& K_{00}^{(n)}(x,y):= \sum_{j=0}^{n-1} p_j(x)q_j(y), & &K_{01}^{(n)} (x,y):= \int K_{00}^{(n)} (x',y) \frac{\alpha(x') \d x'}{x+x'},\\
&K_{10}^{(n)} (x,y):= \int K_ {00}^{(n)}(x,y')\frac {\beta(y') \d y'}{y+y'},& 
&K_{11}^{(n)} (x,y):= \iint K_{00}^{(n)} (x',y')\frac{\alpha(x') \d x'\beta(y') \d y'}{(x+x')(y+y')} - \frac1{x+y}\ .\label{Kkernels}
\end{align*}
With this notation  in place the correlation functions for $r$ eigenvalues $x_1,\dots, x_r$ of $M_1$ and $s$ eigenvalues $y_1,\dots, y_s$ of $M_2$ can be shown to be given by 
\be
\rho_{(r,s)}(x_1,\dots, x_r; y_1,\dots, y_s) = \prod_{j=1}^r \alpha(x_j)\ \prod_{k=1}^s \beta(y_k) 
\det\le[
\begin{array}{c|c}
\big[K_{01}^{(n)} (x_i,x_j)\big]_{i,j\leq r} & \big[K_{00}^{(n)}  (x_i, y_j) \big]_{i\leq r, j\leq s}\\[10pt]
\hline
\rule{0pt}{16pt}\big[K_{11}^{(n)} (y_i,x_j)\big]_{i\leq s, j\leq r} & \big[K_{10}^{(n)}  (y_i, y_j) \big]_{i,j \leq s}  
\end{array}
\ri]
\ee
It is easy to write the correlation function as one determinant by including the 
products in front of the determinant above as determinants of diagonal matrices.  
This is not a unique procedure but the choice that works naturally for the present paper is the following.  We write symbolically 
\begin{equation*}
\rho_{(r,s)}(x_1,\dots, x_r; y_1,\dots, y_s)=\det \le( \begin{bmatrix} 
\mathbf {\alpha(x)}& 0\\0& \mathbf I \end{bmatrix} \begin{bmatrix} \mathbf {K_{01}(x,x)}& \mathbf {K_{00}(x,y)} \\ \mathbf{ K_{11}(y,x)} & \mathbf {K_{10}(y,y)} \end{bmatrix} \begin{bmatrix} \mathbf I&0\\0& \mathbf{\beta(y)} \end{bmatrix} \ri), 
\end{equation*}
where $\mathbf{\alpha(x)}=\text{diag} (\alpha(x_1),\cdots,\alpha(x_r)), \text{diag} (\beta(y_1),\cdots,\beta(y_s))$ respectively.  Hence 
\begin{equation*}
\rho_{(r,s)}(x_1,\dots, x_r; y_1,\dots, y_s)=\det \le( \begin{bmatrix} 
\mathbf {\alpha(x)}\mathbf {K_{01}(x,x)}& \mathbf{\alpha(x)} \mathbf {K_{00}(x,y)}\mathbf{\beta(y)} \\ \mathbf{ K_{11}(y,x)} & \mathbf {K_{10}(y,y)\mathbf{\beta(y)}} \end{bmatrix}  \ri).  
\end{equation*}
This leads to the definition of new kernels: 
\begin{subequations}
\begin{align}
H_{00}^{(n)}(x,y)&:=\alpha(x) \beta(y) K_{00}^{(n)}(x,y),& 
H_{01}^{(n)}(x,y)&:=\alpha(x)K_{01}^{(n)} (x,y),\\
H_{10}^{(n)}(x,y)&:=\beta(y)K_{10}^{(n)} (x,y),&
H_{11}^{(n)}(x,y)&:=K_{11}^{(n)} (x,y) .\label{eq:Hkernels}
\end{align}
\end{subequations}
\section{From the Meijer-G to the  Bessel field}
\label{MtoB}

Starting from the formula for $\mathcal G_{0,1}(\zeta,\xi)$  in \eqref{g00g01} one has for $b\to \infty$ 
\bea
&\& \frac {b}4\,\mathcal G_{01}\le(\frac {b  \zeta}4 , \frac{b\xi}4 \ri) =\frac b4  \int_0^1 \d t \int_{\gamma^2} \frac {\d u \d v}{(2i\pi)^2} \frac {\Gamma(u+a) \Gamma(v)\Gamma(v+b)}{\Gamma(1-u)\Gamma(a+1-v)\Gamma(b+1-u)} \le(\frac {t\zeta b}{4} \ri)^{-u}\le(\frac {t\xi b}{4} \ri)^{-v} \mathop{=}^{\tiny Lemma \ref{lemmagamma}} \\
&\&= \frac b 4 \int_0^1 \d t \int_{\gamma^2} \frac {\d u \d v}{(2i\pi)^2} \frac {\Gamma(u+a) \Gamma(v) b^{v+u-1}}{\Gamma(1-u)\Gamma(a+1-v)} \le(\frac {t\zeta b}{4} \ri)^{-u}\le( \frac {t\xi b}{4} \ri)^{-v}(1 + \mathcal O(b^{-1}) )=\\
&\& =\frac 14\int_0^1 \d t \int_{\gamma^2} \frac {\d u \d v}{(2i\pi)^2} \frac {\Gamma(u+a) \Gamma(v)}{\Gamma(1-u)\Gamma(a+1-v)} \le(\frac {t\zeta}{4} \ri)^{-u}\le( \frac {t\xi}{4} \ri)^{-v}(1 + \mathcal O(b^{-1}) )
\mathop{=}^{\eqref{BesselKernel}}\\
&\&=  \le(\frac \zeta \xi\ri)^\frac a 2 K_{_B,a}(\zeta,\xi)\,(1 + \mathcal O (b^{-1}))
\eea
where we have shifted the variable $u\mapsto u-a$ to compare with the expressions in \eqref{BesselKernel}.
To see that the remaining kernels vanish let us perform similar computations for $\mathcal G_{10} \text{ and } \mathcal G_{00}$.  We obtain 
\bea
&\& \frac {b}4\,\mathcal G_{00}\le(\frac {b  \zeta}4 , \frac{b\xi}4 \ri) =\frac b4  \int_0^1 \d t \int_{\gamma^2} \frac {\d u \d v}{(2i\pi)^2} \frac {\Gamma(u+a) \Gamma(v+b)}{\Gamma(1-u)\Gamma(1-v) \Gamma(a+1-v)\Gamma(b+1-u)} \le(\frac {t\zeta b}{4} \ri)^{-u}\le(\frac {t\xi b}{4} \ri)^{-v} \mathop{=}^{\tiny Lemma \ref{lemmagamma}} \\
&\& =\frac 14\int_0^1 \d t \int_{\gamma^2} \frac {\d u \d v}{(2i\pi)^2} \frac {\Gamma(u+a)}{\Gamma(1-u)\Gamma(1-v)\Gamma(a+1-v)} \le(\frac {t\zeta}{4} \ri)^{-u}\le( \frac {t\xi}{4} \ri)^{-v}(1 + \mathcal O(b^{-1}) )=\mathcal O(b^{-1}).  
\eea
The reason  the integral vanishes (to a leading order), is because the integration in $v$ has no singularities within the contour $\gamma$, which can be retracted to $-\infty$ and easily estimated to give zero. Similarly 
\bea
&\& \frac {b}4\,\mathcal G_{10}\le(\frac {b  \zeta}4 , \frac{b\xi}4 \ri) =\frac b4  \int_0^1 \d t \int_{\gamma^2} \frac {\d u \d v}{(2i\pi)^2} \frac {\Gamma(u+b) \Gamma(v)\Gamma(v+a)}{\Gamma(1-u)\Gamma(b+1-v)\Gamma(a+1-u)} \le(\frac {t\zeta b}{4} \ri)^{-u}\le(\frac {t\xi b}{4} \ri)^{-v} \mathop{=}^{\tiny Lemma \ref{lemmagamma}} \\
&\& =\frac 14\int_0^1 \d t \int_{\gamma^2} \frac {\d u \d v}{(2i\pi)^2} \frac {\Gamma(v+a) \Gamma(v)}{\Gamma(1-u)\Gamma(a+1-u)} \le(\frac {t\zeta}{4} \ri)^{-u}\le( \frac {t\xi}{4} \ri)^{-v}(1 + \mathcal O(b^{-1}) ) = \mathcal O(b^{-1}), 
\eea
where this time the leading term vanishes because of the $u$-integration.
Finally 
\bea
&\& \frac {b}4\,\mathcal G_{11}\le(\frac {b  \zeta}4 , \frac{b\xi}4 \ri) =
\frac b4  \int_0^1 \d t \int_{\gamma^2} \frac {\d u \d v}{(2i\pi)^2} 
\frac {\Gamma(u)\Gamma(v)\Gamma(u+a)\Gamma(v+b)}{\Gamma(b+1-u)\Gamma(a+1-v)}
 \le(\frac {t\zeta b}{4} \ri)^{-u}\le(\frac {t\xi b}{4} \ri)^{-v} - \frac 1{\zeta +\xi} \mathop{=}^{\tiny Lemma \ref{lemmagamma}} \\
&\& =\frac 14\int_0^1 \d t \int_{\gamma^2} \frac {\d u \d v}{(2i\pi)^2}
\frac {\Gamma(u)\Gamma(v)\Gamma(u+a)}{\Gamma(a+1-v)}
  \le(\frac {t\zeta}{4} \ri)^{-u}\le( \frac {t\xi}{4} \ri)^{-v}(1 + \mathcal O(b^{-1}) )- \frac 1{\zeta + \xi}
\eea
As the reader can see, the kernel has a limit that does not identically vanish; however this is inconsequential for the correlation functions, since this kernel appears in the lower-left block of the determinant \eqref{1-12} and since the opposite block containing $\mathcal G_{00}$ tends to zero, the correlation function will not contain $\mathcal G_{11}$ either (in the leading order). Thus all correlation functions for the eigenvalues of $M_1$ (in the scaling limit) will behave like the Bessel DRPF, while those involving the $-$ field tend to zero uniformly over compact sets.
The computations for the kernels $H^{(n)}_{\mu\nu}$ with the scaling \eqref{219} are similar and we omit them.

%

\def\cprime{$'$}

%
%
%
%
%
%
%
%

\end{document}